
\documentclass[fleqn,preprint,3p,a4paper]{elsarticle}
\UseRawInputEncoding




\usepackage{amssymb}
\usepackage{amsmath}
\usepackage{amsthm}
\usepackage{dcolumn}
\usepackage{endnotes}
\usepackage{tabularx}
\usepackage[matrix,arrow]{xy}
\usepackage{wasysym}
\usepackage[colorlinks,
            linkcolor=red,
            anchorcolor=red,
            citecolor=red]{hyperref}

\theoremstyle{plain}

  \newtheorem{thm}{Theorem}[section]
  \newtheorem{lem}[thm]{Lemma}
  \newtheorem{prop}[thm]{Proposition}
  
\theoremstyle{definition}
  \newtheorem{defn}[thm]{Definition}

  \newtheorem{rem}[thm]{Remark}

\theoremstyle{break}



\newcommand{\Ua}[0]{\makebox[1ex][l]{\lower.15ex
                                 \hbox{$\uparrow$}}\kern-1ex\lower-.15ex
                                 \hbox{$\uparrow$}}

\newcommand{\Da}[0]{\makebox[1ex][l]{\lower.15ex
                                 \hbox{$\downarrow$}}\kern-1ex\lower-.15ex
                                 \hbox{$\downarrow$}}
\journal{}

\begin{document}

\begin{frontmatter}



\title{QFS-space and its properties\tnoteref{t1}}
\tnotetext[t1]{This research was supported by the National Natural Science Foundation of China (Nos. 12071199, 12071188, 11661057)}.

\author[Wang]{Wu Wang\corref{mycorrespondingauthor}}
\cortext[mycorrespondingauthor]{Corresponding author}
\ead{wangwu@alu.scu.edu.cn}
\address[Wang]{Basic Course Department, Zhonghuan Information College Tianjin University of Technology, Tinjin 300380, China}

\begin{abstract}
In this paper, the concept of quasi-finitely separating map and quasiapproximate identity are introduced. Based on these concepts, QFS-spaces and quasicontinuous maps are defined. Properties and characterizations of QFS-spaces are explored. Main results are: (1) Each QFS-space is  quasicontinuous space; (2) Closed subspaces, quasicontinuous projection spaces of QFS-spaces are QFS-spaces; (3) Continuous retracts of QFS-spaces are QFS-spaces and a kind of retracts of QFS-spaces are constructed; (4) Upper powerspaces of continuous QFS-spaces are FS-spaces.
\end{abstract}

\begin{keyword}
quasicontinuous-space; QFS-space; continuous retract; quasicontinuous projection; upper powerspace
\MSC 54B20; 54D99; 06B35; 06F30

\end{keyword}




\end{frontmatter}


\section{Introduction}

Domain theory was first introduced by Dana Scott in the early 1970s, and the main purpose is to provide a mathematical tool for the semantics of functional programming languages, see\cite{1,2,3,20}. The most distinctive feature of domain theory is that it integrates order structures, topology structures and
computer science\cite{4,6,7,15,16,17}. The main objects of domain theory are posets and domains, see\cite{4,5,8}. In \cite{16}, Directed space is defined by Hui Kou. It is easy to see that directed spaces are equivalent to monotone determined spaces, which is defined in \cite{17}. $DTOP$, the category of directed spaces was shown to be cartesian closed.

In \cite{9}, Chen, Kou and Lyu investigated two approximation relations on a $T_{0}$ topological space, the n-approximation and the d-pproximation.
Different kinds of continuous spaces are defined by the two approximations. They showed that a $T_{0}$ topological space is n-continuous iff it is a c-space iff it  is d-continuous directed space. In order to find the maximal full cartesian closed subcategories of d-continuous directed space, Xie and Kou \cite{13} introduced the concept of FS-space.

As a generalization of d-continuous spaces, the concept of quasicontinuous space was introduced in \cite{21} by Feng and Kou. There are many examples of directed spaces which are quasicontinuous spaces but not continuous spaces. Therefore, a quasicontinuous space may not be a continuous space actually, let alone an FS-space. Now questions naturally arise: what is the counterpart of an FS-space in the realm of quasicontinuous spaces and to what extent does the counterpart share nice properties possessed by FS-spaces? In this paper we manage to give answers to these questions. First, we introduce the concepts of quasi-finitely separating map and quasi-approximate identity. Then QFS-spaces and quasicontinuous maps are defined accordingly, and properties as well as characterizations of QFS-domains are explored. Main results are:  (1) Each QFS-space is  quasicontinuous space; (2) Closed subspaces, quasicontinuous projection spaces of QFS-spaces are QFS-spaces; (3) Continuous retracts of QFS-spaces are QFS-spaces and a kind of retracts of QFS-spaces are constructed; (4) Upper powerspaces of continuous QFS-spaces are FS-spaces.

\section{Preliminaries}

Now, we introduce the concepts needed in this paper. The readers can also consult\cite{8,11,12,14}. A nonempty set $L$ endowed with a partial order $\leq$ is called  poset.
A subset $D\subseteq L$ is called directed set if for any $x,y\in D$, there exists $d\in D$ such that $x, y\leq d$. A poset $L$ is called a directed complete poset ($dcpo$, for short) if any directed subset of $L$ has a sup in $L$. For any $x, y\in L$, we say that $x$ is way below $y$ (denoted by $x\ll_{s} y$) if for any directed set $D$ of $L$, $y\leq \vee D$ implies that there is some $d\in D$ with $x\leq d$. A poset $L$ is called continuous if for any $x\in L$, $\{a\in L: a\ll_{s} x\}$ is directed set and has $x$ as its supremum. For a subset $A$ of $L$, let $\uparrow A =\{x\in L: \exists a\in A, a\leq x\}$, $\downarrow A =\{x\in L:\exists a\in A, x\leq a\}$. We use $\uparrow a$ (resp.$\downarrow a$) instead of $\uparrow \{a\}$(resp. $\downarrow \{a\}$) when $A=\{a\}$. $A$ is called an upper (resp. a lower) set if $A=\uparrow A$ (resp. $A=\downarrow A$).

Let $L$ be a poset and $U\subseteq L$. Then $U$ is called Scott open iff it satisfies: (1) $U=\uparrow U$; (2) For any directed sets $D\subseteq L$, $\vee D\in U$ implies $D\cap U\neq\emptyset$. The collection of all Scott open subsets of $L$ is called the Scott topology of $L$ and denoted by $\sigma(L)$.

In this paper, topological spaces will always be supposed to be $T_{0}$ spaces. For a topological space $X$, its topology is denoted
by $\tau$. The partial order $\leq$ defined on $X$ by $x\leq y \Leftrightarrow x\in cl_{\tau}\{y\}$ is called the specialization order\cite{11,18,19},
where $cl_{\tau}\{y\}$ is the closure of $\{y\}$. From now on, all order-theoretical statements about $T_{0}$ spaces, such as
upper sets, lower sets, directed sets, and so on, always refer to the specialization order.

A net of a topological space $X$ is a map $\xi: J\rightarrow X$, where $J$ is a directed set. Usually, we denote a net by $(x_{j})_{j\in J}$. Let
$x\in X$, saying $(x_{j})_{j\in J}$ converges to $x$, denoted by $(x_{j})_{j\in J}\rightarrow_{\tau} x$, if $(x_{j})_{j\in J}$ is eventually in every
open neighborhood of $x$, that is, for any given open neighborhood $U$ of $x$, there exists $j_{0}\in J$ such that for every $j\in J$, $j\geq j_{0}\Rightarrow x_{j}\in U$.

Let $X$ be a $T_{0}$ topological space,  then any directed subset of $X$ can be regarded as a net, and its index set is itself. We use $D\rightarrow_{\tau} x$ to represent $D$ converges to $x$. Define notation $D(X)=\{(D, x): x\in X, D$ is a directed subset of $X$ and $D\rightarrow_{\tau} x\}$. It is easy to verify that, for any $x, y\in X$, $x\leq y\Leftrightarrow \{y\}\rightarrow_{\tau} x$ and for directed set $D$, if $\exists d\in D$, $x\leq d$, then $D\rightarrow_{\tau} x$. Therefore, if $x\leq y$, then
$(\{y\}, x)\in D(X)$. Next, we give the concept of directed space. A subset $U$ of $X$ is called a directed open set if $\forall(D, x)\in D(X)$, $x\in U\Rightarrow D\cap U\neq \emptyset$. Denote all directed open sets of $X$ by $d(X)$. Obviously, every open set of $X$ is directed open, that is, $\tau\subseteq d(X)$.

Let $X$ be a $T_{0}$ topological space. $X$ is called directed space if every directed open set of $X$ is an open set, that is, $d(X)=\tau$\cite{13,16}.

\begin{rem}\label{2.1} \cite{13,16} Let $X$ be a $T_{0}$ topological space.

(1) The definition of directed space here is equivalent to the monotone determined space
defined in \cite{17}.

(2) Every poset equipped with the Scott topology is a directed space \cite{16,18}, besides, each
Alexandroff space is a directed space. Thus, the directed space extends the concept of
the Scott topology.

(3) If $U\in d(X)$, $U=\uparrow U$.

(4) $X$ equipped with $d(X)$ is a $T_{0}$ topological space such that $\leq_{d}=\leq$, where $\leq_{d}$ is the specialization
order relative to $d(X)$.

(5) For a directed subset $D$ of $X$, $D\rightarrow x\Leftrightarrow D\rightarrow_{d(X)} x$ for all $x\in X$, where $D\rightarrow_{d(X)} x$ means that $D$ converges to $x$ with respect to the topology $d(X)$.

(6)For each $x\in X$, $\downarrow x$ is directed closed.

\end{rem}
Let $X$ be a $T_{0}$ topological space. The topology
generated by the complements of all principal filters $\uparrow x$ is called the lower topology and denoted by $\omega(X)$. The common refinement $d(x) \vee \omega(X)$ of the directed topology
and lower topology is called the Lawson topology, denoted by $\lambda(X)$. If $X$ be a directed space, then $\lambda(X)=d(x) \vee \omega(X)=\tau \vee \omega(X)$.

Let $X$ be a directed space and $x, y\in X$. We say that $x$ is d-way below $y$, denoted by $x\ll_{d} y$, if for any directed subset $D$ of $X$, $D\rightarrow_{\tau} y$ implies $x\leq d$ for some $d\in D$. If $x\ll_{d} x$, then $x$ is called d-compact element of $X$. For any directed space $X$ and $x\in X$, we denote $K_{d}(X)=\{x\in X: x\ll_{d} x\}$, $\Da_{d} x=\{y\in X: y\ll_{d} x\}$ and $\Ua_{d} x=\{y\in X: x\ll_{d} y\}$. The d-way below relation is a natural extension of way-below relation. Let $X$ be a directed space. $X$ is called d-continuous if $\Da_{d} x$ is directed and $\Da_{d} x\rightarrow_{\tau} x$ for any $x\in X$\cite{19}.

Let $X$ be a $T_{0}$ topological space and $\mathcal{P}^{w}(X)$ be the family of all nonempty finite subsets of $X$. Given any two subsets $G, H$ of $X$, we define $G\leq H$ iff $H\subseteq \uparrow G$. A family of finite sets $\mathcal{F}\subseteq\mathcal{P}^{w}(X)$ is said to be directed if given $F_{1}, F_{2}$ in the family, there exists $F\in \mathcal{F}$ such that $F\subseteq \uparrow F_{1}\cap \uparrow F_{2}$. Unless otherwise specified, directed family is always defined as above.
Let $\mathcal{F}\subseteq \mathcal{P}^{w}(X)$ be a directed family. We say that $\mathcal{F}\rightarrow_{\tau} x$ if for any open neighbourhood $U$ of $x$, there exists some $F\in \mathcal{F}$ such that $F\subseteq U$.

Let $X$ be a directed space and $G, H\subseteq X$. We say that $G$ d-approximates $H$, denoted by $G\ll_{d} H$, if for any directed subset $D$ of $X$, $D\rightarrow h$ for some $h\in H$ implies $D\cap \uparrow G\neq \emptyset$. We
write $G\ll_{d} x$ for $G\ll_{d} \{x\}$. $G$ is said to be d-compact if $G\ll_{d} G$. For any $T_{0}$ topological space $X$ and $F\subseteq X$, we denote $\Uparrow_{d} F=\{x\in X: F\ll_{d} x\}$. A topological space $X$ is called d-quasicontinuous if it is a directed space such that
for any $x\in X$, the family $fin_{d}(x)=\{F: F\in \mathcal{P}^{w}(X), F\ll_{d} x\}$ is a directed family and converges to $x$.

\begin{thm}\label{2.2} \cite{21} Let $X$ be a
d-quasicontinuous space. The following statements hold.

(1) Given any $H\in \mathcal{P}^{w}(X)$ and $y\in X$, $H\ll_{d}y$ implies $H\ll_{d}F\ll_{d}y$ for some finite subset $F\in \mathcal{P}^{w}(X)$.

(2) Given any $F\in \mathcal{P}^{w}(X)$, $\Uparrow_{d} F =(\uparrow F)^{\circ}$. Moreover, $\{\Uparrow_{d} F : F\in \mathcal{P}^{w}(X)\}$ is a base of $\tau$.
\end{thm}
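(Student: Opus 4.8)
Both parts turn on the single fact that every set $\Uparrow_{d}F$ is open; once that is in hand, (1) and (2) follow quickly, so I would first record the elementary order-theoretic facts, then confront openness/interpolation, which is the real content. First I would note that, $X$ being $T_{0}$, every open set is an upper set, and hence if a directed set $D\rightarrow_{\tau}d$ and $f\leq d$ then $D\rightarrow_{\tau}f$ (any open neighbourhood of $f$ already contains $d$). This yields the two transitivity facts I will use repeatedly: $H\ll_{d}f$ together with $f\leq d$ gives $H\ll_{d}d$, and $G\ll_{d}H$ together with $H\ll_{d}z$ gives $G\ll_{d}z$. Testing $F\ll_{d}x$ against the constant net $\{x\}\rightarrow_{\tau}x$ gives $\Uparrow_{d}F\subseteq\uparrow F$, while if $x\in(\uparrow F)^{\circ}$ then any directed net converging to $x$ is eventually inside $(\uparrow F)^{\circ}\subseteq\uparrow F$, so $(\uparrow F)^{\circ}\subseteq\Uparrow_{d}F$. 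I would also record the reformulation $G\ll_{d}H\Leftrightarrow H\subseteq\Uparrow_{d}G$ (equivalently $G\ll_{d}h$ for every $h\in H$) and the fact that each $\Uparrow_{d}H$ is an upper set. Together these reduce the entire theorem to showing that $\Uparrow_{d}H$ is open, which is exactly equivalent to the interpolation asserted in (1).

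For (1), by the reformulation I must produce $F\in fin_{d}(y)$ with $F\subseteq\Uparrow_{d}H$, and I would argue by contradiction: assume every $F\in fin_{d}(y)$ meets $X\setminus\Uparrow_{d}H$. Since $\Uparrow_{d}H$ is an upper set and $fin_{d}(y)$ is a directed family converging to $y$, the sets $F\setminus\Uparrow_{d}H$ form a directed family of nonempty finite sets still converging to $y$. For each such witness $a\notin\Uparrow_{d}H$ one has $H\not\ll_{d}a$, so there is a directed set contained in $X\setminus\uparrow H$ converging to $a$. The aim is to splice these witnessing directed sets into a single directed set lying in $X\setminus\uparrow H$ and converging to $y$, which would contradict $H\ll_{d}y$. \textbf{This assembly is the main obstacle.} Quasicontinuity only supplies convergent directed \emph{families}, whereas $\ll_{d}$ is defined through genuine directed sets, and the gap between $\uparrow H$ and $\Uparrow_{d}H$ forces a two-layer construction: first select witnesses outside $\Uparrow_{d}H$, then descend inside $X\setminus\uparrow H$. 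A single application of a Rudin-type selection lemma is not enough; one must control directedness through the double selection, using Remark~\ref{2.1}(5) to keep $\tau$-convergence and $d(X)$-convergence aligned. I expect this bookkeeping to be where essentially all the difficulty lies.

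Granting (1), I would deduce (2) as follows. To see $\Uparrow_{d}F$ is directed open, let $D\rightarrow_{\tau}x$ be directed with $F\ll_{d}x$. By (1) choose $F_{1}$ with $F\ll_{d}F_{1}\ll_{d}x$. From $F_{1}\ll_{d}x$ and $D\rightarrow_{\tau}x$ there are $d\in D$ and $f_{1}\in F_{1}$ with $f_{1}\leq d$; since $F_{1}\subseteq\Uparrow_{d}F$ we have $F\ll_{d}f_{1}$, and $f_{1}\leq d$ then gives $F\ll_{d}d$, so $d\in D\cap\Uparrow_{d}F$. Thus $\Uparrow_{d}F$ is directed open, hence open as $X$ is a directed space; combined with $\Uparrow_{d}F\subseteq\uparrow F$ this yields $\Uparrow_{d}F\subseteq(\uparrow F)^{\circ}$, and with the reverse inclusion from the first paragraph, $\Uparrow_{d}F=(\uparrow F)^{\circ}$. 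For the base statement, given an open set $U$ and $x\in U$, the convergence $fin_{d}(x)\rightarrow_{\tau}x$ produces $F\ll_{d}x$ with $F\subseteq U$; then $x\in\Uparrow_{d}F$ and, $U$ being an upper set, $\Uparrow_{d}F\subseteq\uparrow F\subseteq U$. Hence every open set is a union of sets $\Uparrow_{d}F$, so $\{\Uparrow_{d}F:F\in\mathcal{P}^{w}(X)\}$ is a base of $\tau$.
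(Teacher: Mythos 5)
Your preliminary reductions and your derivation of (2) from (1) are correct and essentially the standard argument: the order-theoretic facts, the two inclusions $\Uparrow_{d}F\subseteq\uparrow F$ and $(\uparrow F)^{\circ}\subseteq\Uparrow_{d}F$, and the step from interpolation to openness of $\Uparrow_{d}F$ are all sound. Note that the paper itself gives no proof of this theorem (it is quoted from \cite{21}), so the comparison below is with the intended argument via the tools the paper does supply in its preliminaries.

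The genuine gap is part (1), which you correctly identify as carrying all the content but do not actually prove. You reduce it to splicing, for each witness $a\notin\Uparrow_{d}H$ drawn from a directed family converging to $y$, a directed set $D_{a}\subseteq X\setminus\uparrow H$ with $D_{a}\rightarrow_{\tau}a$ into a single directed set inside $X\setminus\uparrow H$ converging to $y$ --- and you then declare this assembly to be the main obstacle without carrying it out. That is precisely the step that cannot be done naively: the sets $D_{a}$ for different $a$ need not be mutually cofinal, so no union or diagonal of them is directed, and this is exactly why the Rudin-type content is packaged as a statement about directed \emph{families} of finite sets rather than about directed sets. The paper hands you that packaged tool as Lemma~\ref{2.4}, which your part (1) never invokes. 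The standard route is: form $\mathcal{G}=\{\bigcup_{f\in F}G_{f}: F\in fin_{d}(y),\ G_{f}\in fin_{d}(f)\ \text{for each}\ f\in F\}$; each member satisfies $\bigcup_{f\in F}G_{f}\ll_{d}y$, and $\mathcal{G}$ is a directed family converging to $y$ (the directedness check itself uses Lemma~\ref{2.4} to refine $G_{f}\ll_{d}f\leq f''$ inside $fin_{d}(f'')$); applying Lemma~\ref{2.4} with $G:=H$ and $H:=\{y\}$ produces some $E=\bigcup_{f\in F}G_{f}\subseteq\uparrow H$, and then for every $f\in F$ one has $\uparrow G_{f}\subseteq\uparrow H$ and $G_{f}\ll_{d}f$, hence $H\ll_{d}f$; this $F$ is the desired interpolant. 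Until some such argument replaces your ``assembly'' paragraph, part (1) --- and with it the openness of $\Uparrow_{d}F$ in part (2) --- remains unproven.
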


Let $X$ be a d-quasicontinuous space and $x\in X$, then $\Uparrow _{d}\{x\}=\Ua_{d} x$ is a open subset of $X$. A $T_{0}$ topological space $X$ is called locally hypercompact, if for any open subsets $U$ of $X$ and $x\in U$, there exists some $F\in \mathcal{P}^{w}(X)$ such that $x\in (\uparrow F)^{\circ} \subseteq \uparrow F\subseteq U$.

\begin{thm}\label{2.3} \cite{21} Let $X$ be a directed space, Then the following three conditions are equivalent to each other.

(1) $X$ is a quasicontinuous space;

(2) $X$ is a locally hypercompact space;

(3) There exists a directed family $\mathcal{F }\subseteq fin(x)$ such that $\mathcal{F}\rightarrow_{\tau} x$ for any $x\in X$.
\end{thm}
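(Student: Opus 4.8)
The plan is to establish the equivalences through the cycle $(1)\Rightarrow(2)\Rightarrow(3)\Rightarrow(1)$, reading ``quasicontinuous'' as ``d-quasicontinuous'' and $fin(x)$ as $fin_{d}(x)$ throughout. For $(1)\Rightarrow(2)$, I would fix an open set $U$ and a point $x\in U$. Since $X$ is quasicontinuous, $fin_{d}(x)\rightarrow_{\tau}x$, so there is $F\in fin_{d}(x)$ with $F\subseteq U$. As $F\ll_{d}x$ gives $x\in\Uparrow_{d}F$, Theorem \ref{2.2}(2) yields $\Uparrow_{d}F=(\uparrow F)^{\circ}$, whence $x\in(\uparrow F)^{\circ}$; and since open sets are upper sets, $F\subseteq U$ forces $\uparrow F\subseteq U$. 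Thus $x\in(\uparrow F)^{\circ}\subseteq\uparrow F\subseteq U$, which is exactly local hypercompactness.

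For $(2)\Rightarrow(3)$, I would fix $x$ and set $\mathcal{F}=\{F\in\mathcal{P}^{w}(X):x\in(\uparrow F)^{\circ}\}$. First, $\mathcal{F}\subseteq fin_{d}(x)$: if $x\in(\uparrow F)^{\circ}$ and $D$ is directed with $D\rightarrow_{\tau}x$, then $D$ is eventually in the open set $(\uparrow F)^{\circ}\subseteq\uparrow F$, so $D\cap\uparrow F\neq\emptyset$ and hence $F\ll_{d}x$. Local hypercompactness supplies, for each open $U\ni x$, some $F$ with $x\in(\uparrow F)^{\circ}\subseteq\uparrow F\subseteq U$; such $F$ lies in $\mathcal{F}$ and satisfies $F\subseteq U$, so $\mathcal{F}\rightarrow_{\tau}x$. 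For directedness, given $F_{1},F_{2}\in\mathcal{F}$ I would apply local hypercompactness to the open neighbourhood $(\uparrow F_{1})^{\circ}\cap(\uparrow F_{2})^{\circ}$ of $x$ to obtain $F\in\mathcal{F}$ with $\uparrow F\subseteq\uparrow F_{1}\cap\uparrow F_{2}$, so $F\subseteq\uparrow F_{1}\cap\uparrow F_{2}$.

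For $(3)\Rightarrow(1)$, let $\mathcal{F}\subseteq fin_{d}(x)$ be directed with $\mathcal{F}\rightarrow_{\tau}x$. Convergence of $fin_{d}(x)$ is immediate, since any open $U\ni x$ already contains some $F\in\mathcal{F}\subseteq fin_{d}(x)$. The substantive point is directedness of $fin_{d}(x)$, which I would reduce to the selection lemma: if $G\ll_{d}x$ then some $F\in\mathcal{F}$ satisfies $F\subseteq\uparrow G$. Granting this, given $F_{1},F_{2}\in fin_{d}(x)$ I pick $H_{1},H_{2}\in\mathcal{F}$ with $H_{i}\subseteq\uparrow F_{i}$, then use directedness of $\mathcal{F}$ to find $H\in\mathcal{F}\subseteq fin_{d}(x)$ with $H\subseteq\uparrow H_{1}\cap\uparrow H_{2}\subseteq\uparrow F_{1}\cap\uparrow F_{2}$, as required.

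The selection lemma is the main obstacle, and I expect to settle it by a Rudin-type argument. Assuming toward a contradiction that $F\not\subseteq\uparrow G$ for every $F\in\mathcal{F}$, I would pass to the complement $C=X\setminus\uparrow G$, which is a lower set, and check that $\{F\cap C:F\in\mathcal{F}\}$ is again a directed family of nonempty finite sets; here it is precisely the fact that $C$ is a lower set that makes $F_{3}\cap C\subseteq\uparrow(F_{1}\cap C)$ hold whenever $F_{3}\subseteq\uparrow F_{1}$. Rudin's Lemma then produces a directed set $D\subseteq C$ meeting every $\uparrow(F\cap C)$. Using $\mathcal{F}\rightarrow_{\tau}x$ together with the fact that open sets are upper, I would verify $D\rightarrow_{\tau}x$: for open $U\ni x$ choose $F_{0}\in\mathcal{F}$ with $F_{0}\subseteq U$ and a point $d_{0}\in D\cap\uparrow(F_{0}\cap C)$, so that the cofinal tail $\{d\in D:d\geq d_{0}\}$ lies in $U$. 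Then $G\ll_{d}x$ forces $D\cap\uparrow G\neq\emptyset$, contradicting $D\subseteq C$. I anticipate the delicate steps to be the exact formulation of Rudin's Lemma invoked and the verification that the extracted directed set converges to $x$ in $\tau$.
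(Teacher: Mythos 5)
The paper does not prove Theorem \ref{2.3} at all --- it is quoted from \cite{21} as a known result --- so there is no internal proof to compare against; judged on its own terms, your cycle $(1)\Rightarrow(2)\Rightarrow(3)\Rightarrow(1)$ is correct and complete. The implications $(1)\Rightarrow(2)$ and $(2)\Rightarrow(3)$ are essentially definition-unwinding once you use that open sets are upper in the specialization order, and your use of Theorem \ref{2.2}(2) for $\Uparrow_{d}F=(\uparrow F)^{\circ}$ is legitimate since the paper states that result independently. The one substantive ingredient, your ``selection lemma'' in $(3)\Rightarrow(1)$ (if $G\ll_{d}x$ then some $F\in\mathcal{F}$ lies in $\uparrow G$), is exactly Lemma \ref{2.4} of the paper with $H=\{x\}$, so you could have cited it rather than re-derived it; that said, your Rudin-type derivation is sound, and you correctly isolate the two delicate checks --- that $\{F\cap C:F\in\mathcal{F}\}$ remains a directed family of nonempty finite sets precisely because $C=X\setminus\uparrow G$ is a lower set, and that the directed set extracted by Rudin's Lemma converges to $x$ because it meets $\uparrow(F_{0}\cap C)\subseteq\uparrow F_{0}\subseteq U$ and open sets are upper. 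No gaps.
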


\begin{lem}\label{2.4} \cite{21} Let $X$ be a $T_{0}$ topological space, $\mathcal{F}\subseteq \mathcal{P}^{w}(X)$ be directed family, $G, H\in \mathcal{P}^{w}(X)$, $x\in X$. If $G\ll _{d} H$ and $\mathcal{F}\rightarrow_{\tau} x\in H$, then there exists $F\in \mathcal{F}$ such that $F\subseteq \uparrow G$.
\end{lem}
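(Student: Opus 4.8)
The plan is to argue by contradiction: I assume that no member of $\mathcal{F}$ is contained in $\uparrow G$, and then manufacture from $\mathcal{F}$ a single directed \emph{set} that converges to $x$ yet misses $\uparrow G$ entirely. Since $x\in H$, this will contradict $G\ll_d H$, whose defining condition (applied to the point $x$) forbids a directed set converging to $x$ from avoiding $\uparrow G$.

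First I would pass from $\mathcal{F}$ to the family $\mathcal{F}'=\{F\setminus\uparrow G : F\in\mathcal{F}\}$. The contradiction hypothesis guarantees each $F\setminus\uparrow G$ is nonempty, so $\mathcal{F}'\subseteq\mathcal{P}^{w}(X)$. I would then check that $\mathcal{F}'$ inherits directedness: given $F_1,F_2\in\mathcal{F}$, choose $F_3\in\mathcal{F}$ with $F_3\subseteq\uparrow F_1\cap\uparrow F_2$; for any $z\in F_3\setminus\uparrow G$ there is $f\in F_1$ with $z\geq f$, and since $\uparrow G$ is an upper set while $z\notin\uparrow G$, we must have $f\notin\uparrow G$, i.e. $f\in F_1\setminus\uparrow G$. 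This yields $F_3\setminus\uparrow G\subseteq\uparrow(F_1\setminus\uparrow G)$, and symmetrically for $F_2$, so $\mathcal{F}'$ is directed. I would also observe $\mathcal{F}'\rightarrow_\tau x$, since any open neighbourhood $U$ of $x$ contains some $F\in\mathcal{F}$ and hence the smaller set $F\setminus\uparrow G$.

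Next I would invoke the classical Rudin Lemma for the directed family $\mathcal{F}'$, obtaining a directed set $D\subseteq\bigcup\mathcal{F}'$ that meets every member of $\mathcal{F}'$. By construction $D\subseteq X\setminus\uparrow G$, so $D\cap\uparrow G=\emptyset$. To finish I would verify $D\rightarrow_\tau x$: given an open neighbourhood $U$ of $x$, pick $F'\in\mathcal{F}'$ with $F'\subseteq U$ and a point $d_0\in D\cap F'\subseteq U$; because open sets are upper sets in the specialization order and $D$ is directed, every $d\in D$ with $d\geq d_0$ lies in $U$, so $D$ is eventually in $U$. Thus $D$ is a directed set with $D\rightarrow_\tau x\in H$ and $D\cap\uparrow G=\emptyset$, contradicting $G\ll_d H$; hence some $F\in\mathcal{F}$ satisfies $F\subseteq\uparrow G$.

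I expect the main obstacle to be the passage from the directed \emph{family} $\mathcal{F}'$ to an honest directed \emph{set} $D$ without losing convergence, which is precisely where Rudin's Lemma does the work. The two points needing care are the verification that $\mathcal{F}'$ stays directed (relying on $\uparrow G$ being an upper set) and the claim that the selected set $D$ still converges to $x$ (relying on open sets being upper, so that a single hit $d_0\in D\cap U$ upgrades to eventual containment along the directed set).
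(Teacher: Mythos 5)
Your proof is correct: the reduction to the family $\mathcal{F}'=\{F\setminus\uparrow G: F\in\mathcal{F}\}$ preserves directedness (because $\uparrow G$ is an upper set) and convergence, Rudin's Lemma then yields a directed set $D$ disjoint from $\uparrow G$ with $D\rightarrow_{\tau}x\in H$, and this contradicts the definition of $G\ll_{d}H$. The paper itself states Lemma~\ref{2.4} only as a citation to \cite{21} and includes no proof to compare against, but your Rudin-Lemma argument is the standard route to this fact, and the two delicate steps you flag (directedness of $\mathcal{F}'$ and upgrading a single hit $d_{0}\in D\cap U$ to eventual containment via upward-closedness of open sets) are both handled correctly.
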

Continuous dcpos and quasicontinuous dcpos endowed with the Scott topology can be viewed as special d-continuous spaces and d-quasicontinuous spaces.

\begin{defn}\label{2.5} \cite{21} Suppose $X, Y$ are two $T_{0}$ topological spaces. A function $f : X\rightarrow Y$ is called directed
continuous if it is monotone and preserves all limits of directed subset of $X$; that is, $(D, x)\in D(X)\Rightarrow (f(D), f(x))\in D(Y)$.
\end{defn}
Here are some characterizations of the directed continuous functions.

\begin{prop}\label{2.6} \cite{21}  Suppose $X, Y$ are two $T_{0}$ topological spaces. $f : X\rightarrow Y$ is a function between $X$
and $Y$. Then

(1) $f$ is directed continuous if and only if $\forall U \in d(Y)$, $f^{-1}(U)\in d(X)$.

(2) If $X, Y$ are directed spaces, then $f$ is continuous if and only if it is directed continuous.
\end{prop}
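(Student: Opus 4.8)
The plan is to prove statement (1) carefully, since statement (2) will then follow at once: when $X$ and $Y$ are directed spaces we have $d(X)=\tau$ and $d(Y)=\tau$ by definition, so the condition ``$\forall U\in d(Y)$, $f^{-1}(U)\in d(X)$'' appearing in (1) becomes precisely ``$\forall U\in\tau_Y$, $f^{-1}(U)\in\tau_X$'', which is the definition of continuity. Thus I will concentrate on the equivalence in (1) between directed continuity and the preimage condition, establishing each implication separately.

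For the forward implication, assume $f$ is directed continuous and fix $U\in d(Y)$. To show $f^{-1}(U)\in d(X)$, I take an arbitrary pair $(D,x)\in D(X)$ with $x\in f^{-1}(U)$ and must produce a point of $D$ lying in $f^{-1}(U)$. Since $f(x)\in U$ and directed continuity gives $(f(D),f(x))\in D(Y)$, the directed set $f(D)$ converges to $f(x)\in U$; as $U\in d(Y)$, the defining property of directed open sets yields $f(D)\cap U\neq\emptyset$, so some $d\in D$ has $f(d)\in U$, whence $D\cap f^{-1}(U)\neq\emptyset$. This is exactly what it means for $f^{-1}(U)$ to be directed open.

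For the converse, assume $f^{-1}(U)\in d(X)$ for every $U\in d(Y)$; I must verify that $f$ is monotone and preserves directed limits. Monotonicity follows from $\tau\subseteq d(Y)$ together with Remark \ref{2.1}(3): if $x\leq y$ and $V$ is any open neighbourhood of $f(x)$, then $f^{-1}(V)\in d(X)$ is an upper set containing $x$, hence contains $y$, so $f(y)\in V$; since $V$ was arbitrary this gives $f(x)\leq f(y)$. For limit preservation, let $(D,x)\in D(X)$; monotonicity makes $f(D)$ directed, so it remains to show $f(D)\rightarrow_\tau f(x)$. Given an open neighbourhood $V$ of $f(x)$, the set $f^{-1}(V)\in d(X)$ contains $x$, and by Remark \ref{2.1}(5) the convergence $D\rightarrow_\tau x$ coincides with convergence in the topology $d(X)$, so $D$ is eventually in $f^{-1}(V)$; thus there is $d_0\in D$ with $f(d)\in V$ for all $d\geq d_0$.

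The step I expect to require the most care is this last one, namely matching the net $f(D)$ --- which is indexed by itself --- against the eventual behaviour obtained from $D$. The obstacle is that $f(d')\geq f(d_0)$ in $f(D)$ does not force $d'\geq d_0$ in $D$, so I cannot directly transport the tail of $D$ to a tail of $f(D)$. I plan to circumvent this exactly as in the monotonicity argument: since $V$ is open it is an upper set and $f(d_0)\in V$, so every element of $f(D)$ lying above $f(d_0)$ automatically belongs to $V$. This makes $f(D)$ eventually in $V$ as a net indexed by itself, completing the proof that $(f(D),f(x))\in D(Y)$ and hence that $f$ is directed continuous.
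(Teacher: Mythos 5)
Your proof is correct: the forward direction, the monotonicity argument via the fact that directed open sets are upper sets, and the careful handling of the net $f(D)$ indexed by itself all check out, and (2) does follow immediately from (1) once $d(X)=\tau_X$ and $d(Y)=\tau_Y$. Note that the paper itself gives no proof of this proposition --- it is quoted as a preliminary from reference [21] --- so there is nothing to compare against, but your argument is the standard one and is complete.
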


An approximate identity for a directed space $X$ is a directed set $\mathcal{D}\subseteq [X\rightarrow X]$ satisfing
$\mathcal{D}\rightarrow 1_{X}$, that is, $\forall x\in X$, $\mathcal{D}(x)\rightarrow_{\tau} x$(pointwise convergence), where [$X\rightarrow X]$ is the family of all continuous functions on $X$.

\begin{defn}\label{2.7} \cite{13} A continuous function $\delta: X\rightarrow X$ on a directed space $X$ is finitely separating if there
exists a finite set $F_{\delta}$ such that for each $x\in X$, there exists $y\in F_{\delta}$ such that $\delta(x)\leq y\leq x$. A directed
space is finitely separated if there is an approximate identity for $X$ consisting of finitely separating
functions. A finitely separated directed space that is also a c-space will be called an FS-space.
\end{defn}
In \cite{13}, Xie and Kou showed that a finitely separated space is already a c-space, hence an FS-space.

\begin{defn}\label{2.8} Let $X$ be a directed space, $\mathcal{B}\subseteq \mathcal{P}^{w}(X)$.

(1) $\mathcal{B}$ is called a quasibase of $X$ if $B\cap fin(x)$ is a directed family and $B\cap fin(x)\rightarrow_{\tau} x$.
(2) A quasibase is called countable if $|\mathcal{B}|$ is countable.
\end{defn}

Let $X$ be a directed space, then $X$ is d-quasicontinuous spaces iff $X$ has a quasibase.

\section{QFS-space}

In this section, two technical notions of quasi-finitely separating map and quasi-approximate identity are introduced. With these notions, we define QFS-spaces and study basic properties and constructions of them.

\begin{defn}\label{3.1}Let $X$ be a directed space. A map $\delta :X\rightarrow \mathcal{P}^{w}(X)$ is said to be quasi-finitely separating if it satisfies the following three conditions:

(i) If $a\leq b$, then $\delta(b)\subseteq \uparrow \delta(a)$;

(ii) There exists a finite set $F_{\delta}\subseteq X$ (called a quasi-finitely separating set) such that for any $x\in X$, there exists $y\in F_{\delta}$ with $x\in \uparrow y\subseteq \uparrow \delta(x)$.

(iii) For each directed set $D\subseteq X$ and $D\rightarrow_{\tau} x$, $\delta(D)\Rightarrow_{\tau} \delta(x)$, thai is, if $\delta(x)\subseteq U\in \tau$,  $\delta(d)\subseteq U$ for some $d\in D$.
\end{defn}

Denote all maps from $X$ to ${P}^{w}(X)$ by $[X\rightarrow {P}^{w}(X)]$. $\mathcal{D}\subseteq [X\rightarrow {P}^{w}(X)]$ is called directed if $\delta_{1}, \delta_{2}\in \mathcal{D}$, $\delta(x)\subseteq \uparrow\delta_{1}(x)\cap \uparrow\delta_{2}(x)$ for any $x\in X$.
We will use $1^{*}_{X}$ to denote the map $1^{*}_{X}: P\rightarrow \mathcal{P}^{w}(X)$ with $1^{*}_{X}(x)=\{x\}$ for any $x\in X$. A quasi-approximate identity for a directed space $X$ is a directed set $\mathcal{D}\subseteq [X\rightarrow {P}^{w}(X)]$ satisfing $\mathcal{D}\rightarrow_{\tau}  1^{*}_{X}$, that is, $\forall x\in X$, $\mathcal{D}(x)\rightarrow_{\tau} x$ (pointwise convergence).

\begin{defn}\label{3.2} A directed space $X$ is called QFS-domain if there is a quasi-approximate identity for $X$ consisting of quasi-finitely separating maps.
\end{defn}
Let $X$ be a QFS-space and $\delta$ be any quasi-finitely separating map on $X$ with quasi-finitely separating set $F_{\delta}$. Since every $x\in X$ is above some $y\in F_{\delta}$ by Definition \ref{3.1}, $X =\uparrow F_{\delta}$ . Since $F_{\delta}$ is finite, QFS-spaces is a finitely generated upper set.

Let $X$ be a finite set and $\mathcal{D}=\{1^{*}_{X}\}$, then $1^{*}_{X}$ is quasi-finitely separating map, $\mathcal{D}$ is directed and $\mathcal{D}\rightarrow  1^{*}_{X}$. Then $X$ is a QFS-space.

\begin{prop}\label{3.3}  Let $X$ be a directed space. If $\delta$ is a quasi-finitely separating map on $X$, then for any $x\in X$, $\delta(x)\ll_{d} x$.
\end{prop}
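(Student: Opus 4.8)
The plan is to unwind the definition of the $d$-approximation relation and reduce the claim to a convergence argument. By definition, $\delta(x)\ll_{d}x$ means that for every directed set $D\subseteq X$ with $D\rightarrow_{\tau}x$ one has $D\cap\uparrow\delta(x)\neq\emptyset$. So I would fix such a directed set $D$ and aim to produce a single $d\in D$ lying above some element of $\delta(x)$.

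The first step is to attach to each point of $D$ a witness drawn from the quasi-finitely separating set. By condition (ii) of Definition \ref{3.1}, for each $d\in D$ there is some $y_{d}\in F_{\delta}$ with $d\in\uparrow y_{d}$ and $\uparrow y_{d}\subseteq\uparrow\delta(d)$. I collect these into the finite set $S=\{y_{d}:d\in D\}\subseteq F_{\delta}$. The second step is to exploit condition (iii): given any open set $U$ with $\delta(x)\subseteq U$, there is $d\in D$ with $\delta(d)\subseteq U$; since open sets are upper sets for the specialization order (Remark \ref{2.1}(3)), this yields $y_{d}\in\uparrow y_{d}\subseteq\uparrow\delta(d)\subseteq U$. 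Hence every open neighbourhood of $\delta(x)$ meets the finite set $S$.

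The decisive step is to upgrade ``$S$ meets every such $U$'' to ``some fixed $s^{*}\in S$ lies in every such $U$''. Here I would use that $S$ is finite together with the fact that the open sets containing $\delta(x)$ are closed under finite intersection: if no element of $S$ belonged to all of them, then for each $s\in S$ I could choose an open $U_{s}\supseteq\delta(x)$ with $s\notin U_{s}$, and $\bigcap_{s\in S}U_{s}$ would be an open set containing $\delta(x)$ yet disjoint from $S$, contradicting the previous step. Thus some $s^{*}\in S$ lies in every open $U\supseteq\delta(x)$. Since in a $T_{0}$ space the intersection of all open neighbourhoods of a set equals its upper closure (the complement of any $z\notin\uparrow\delta(x)$ is separated from $\delta(x)$ by the open set $X\setminus\downarrow z$), this says precisely $s^{*}\in\uparrow\delta(x)$. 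Writing $s^{*}=y_{d^{*}}$ and using $d^{*}\geq y_{d^{*}}=s^{*}$ together with the fact that $\uparrow\delta(x)$ is an upper set, I conclude $d^{*}\in D\cap\uparrow\delta(x)$, as required.

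I expect the main obstacle to be exactly this last passage, from the pointwise statement (each neighbourhood of $\delta(x)$ is met by $S$) to the uniform one (a single witness lying in all of them); the finiteness of $F_{\delta}$ is what makes it succeed, via the filter property of the neighbourhood system. It is worth noting that the monotonicity condition (i) does not seem to be needed for this proposition: conditions (ii) and (iii) already suffice.
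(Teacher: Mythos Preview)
Your proof is correct, but it follows a genuinely different route from the paper's. Both arguments begin by attaching to each $d\in D$ a witness $y_{d}\in F_{\delta}$ with $d\in\uparrow y_{d}\subseteq\uparrow\delta(d)$ and then exploit the finiteness of these witnesses, but they consolidate differently. The paper uses the \emph{directedness of $D$}: having chosen one $d_{y}$ for each $y$ in the (finite) image $F_{D}\subseteq F_{\delta}$, it picks $d_{0}\in D$ above all of them, observes that $d_{0}\in\uparrow\delta(d)$ for every $d\in D$, and then tests the single open set $X\setminus\downarrow d_{0}$ against condition~(iii) to force $d_{0}\in\uparrow\delta(x)$. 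You instead use the \emph{filter property of the neighbourhood system of $\delta(x)$}: you show $S$ meets every open $U\supseteq\delta(x)$, and then the finiteness of $S$ lets you extract a single $s^{*}$ lying in all such $U$, hence in $\uparrow\delta(x)$. Your argument never invokes directedness of $D$ beyond what is needed to apply condition~(iii), which is a mild conceptual gain; on the other hand, the paper's approach yields the stronger intermediate fact that a single $d_{0}\in D$ lies in $\uparrow\delta(d)$ for \emph{every} $d\in D$. Your closing remark that condition~(i) is not used is correct and applies to the paper's proof as well.
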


\begin{proof}\quad Suppose that $x\in X$, directed set $D\subseteq X$  and $D\rightarrow_{\tau} x$. Let $F_{D}=\{y\in F_{\delta}: \exists d\in D, d\in \uparrow y\subseteq \uparrow \delta(d)\}$. For any $y\in F_{D}$, we can choice a $d_{y}\in D$ such that $d_{y}\in \uparrow y\subseteq \uparrow \delta(d_{y})$. Let $D_{F}=\{d_{y}: y\in F_{D}\}$, then $D_{F}$ is finite. Since $D$ is directed set, $D_{F}\subseteq \downarrow d_{0}$ for some $d_{0}\in D$. Then $d_{0}\in \uparrow\delta(d)$ for any $d\in D$.

If $d_{0}\notin \uparrow \delta(x)$, then $\delta(x)\subseteq X\backslash\downarrow d_{0}$. Since $X\backslash\downarrow d_{0}$ is directed open set and $\delta(D)\Rightarrow_{\tau} \delta(x)$, $\delta(d)\subseteq X\backslash\downarrow d_{0}$ for some $d\in D$. Contradiction with $d_{0}\in \uparrow\delta(d)$. Hence $d_{0}\in \uparrow \delta(x)$ and $\delta(x)\ll_{d} x$.
\end{proof}

\begin{prop}\label{3.4}  Let $X$ be a FS-space, then $X$ is QFS-space.
\end{prop}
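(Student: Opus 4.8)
The plan is to promote each finitely separating continuous function witnessing the FS-structure to a singleton-valued quasi-finitely separating map, and then to show that the resulting collection is a quasi-approximate identity. Concretely, since $X$ is an FS-space, by Definition \ref{2.7} there is an approximate identity $\mathcal{D}\subseteq[X\rightarrow X]$ consisting of finitely separating functions. For each $\delta\in\mathcal{D}$ I would define a map $\hat\delta:X\rightarrow\mathcal{P}^{w}(X)$ by $\hat\delta(x)=\{\delta(x)\}$, and set $\hat{\mathcal{D}}=\{\hat\delta:\delta\in\mathcal{D}\}$. The goal is to verify that $\hat{\mathcal{D}}$ is a quasi-approximate identity made of quasi-finitely separating maps, which by Definition \ref{3.2} makes $X$ a QFS-space.

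The first task is to check that each $\hat\delta$ satisfies the three conditions of Definition \ref{3.1}. For (i), if $a\leq b$ then monotonicity of $\delta$ (a continuous, hence monotone, self-map) gives $\delta(a)\leq\delta(b)$, so $\hat\delta(b)=\{\delta(b)\}\subseteq\uparrow\delta(a)=\uparrow\hat\delta(a)$. For (ii), I would take $F_{\hat\delta}=F_{\delta}$: the finitely separating property yields, for each $x$, some $y\in F_{\delta}$ with $\delta(x)\leq y\leq x$, whence $x\in\uparrow y$ and $\uparrow y\subseteq\uparrow\delta(x)=\uparrow\hat\delta(x)$, exactly as required.

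Condition (iii) is where I expect the only real work. Given a directed set $D\rightarrow_{\tau}x$, I must show $\hat\delta(D)\Rightarrow_{\tau}\hat\delta(x)$. Since $X$ is a directed space, continuity of $\delta$ coincides with directed continuity by Proposition \ref{2.6}(2), so $\delta$ preserves directed limits and $\delta(D)\rightarrow_{\tau}\delta(x)$. Then for any open $U\supseteq\hat\delta(x)=\{\delta(x)\}$, i.e. any open neighbourhood $U$ of $\delta(x)$, the net $\delta(D)$ is eventually in $U$, so $\delta(d)\in U$ for some $d\in D$, giving $\hat\delta(d)=\{\delta(d)\}\subseteq U$. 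This is precisely the content of (iii), and it is really just a restatement of singleton family convergence as ordinary net convergence.

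It remains to assemble $\hat{\mathcal{D}}$ into a quasi-approximate identity. For directedness, given $\hat\delta_{1},\hat\delta_{2}$ I would use directedness of $\mathcal{D}$ in the pointwise order to pick $\delta_{3}\in\mathcal{D}$ with $\delta_{1}(x),\delta_{2}(x)\leq\delta_{3}(x)$ for all $x$; then $\hat\delta_{3}(x)=\{\delta_{3}(x)\}\subseteq\uparrow\delta_{1}(x)\cap\uparrow\delta_{2}(x)=\uparrow\hat\delta_{1}(x)\cap\uparrow\hat\delta_{2}(x)$, which is exactly the directedness condition for families of such maps. For the convergence $\hat{\mathcal{D}}\rightarrow_{\tau}1^{*}_{X}$, fix $x$ and an open neighbourhood $U$ of $x$; since $\mathcal{D}(x)\rightarrow_{\tau}x$ pointwise there is $\delta\in\mathcal{D}$ with $\delta(x)\in U$, so $\hat\delta(x)=\{\delta(x)\}\subseteq U$, proving $\hat{\mathcal{D}}(x)\rightarrow_{\tau}x$. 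Thus $\hat{\mathcal{D}}$ is a quasi-approximate identity of quasi-finitely separating maps and $X$ is a QFS-space. The whole argument is a systematic singleton-to-finite-set translation; no essential difficulty arises beyond carefully matching the family-theoretic definitions of Section 3 against the function-theoretic ones of Definition \ref{2.7}.
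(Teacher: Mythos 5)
Your construction is exactly the paper's: the paper also sends each finitely separating $\delta$ to the singleton-valued map $\delta^{*}(x)=\{\delta(x)\}$ and asserts that the resulting family is a quasi-approximate identity of quasi-finitely separating maps. Your write-up simply supplies the routine verifications that the paper omits, and they are all correct.
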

\begin{proof} Let $X$ be a FS-space and $\mathcal{D}$ be an approximate identity for $X$ consisting of finitely
separating functions, let $\mathcal{D}^{*}=\{\delta^{*}: \delta\in \mathcal{D}\}$, where $\delta^{*}(x)=\{\delta(x)\}$ for any $\delta\in \mathcal{D}$ and $x\in X$.
Then $\mathcal{D}^{*}$ is a quasi-approximate identity for $X$ consisting of quasi-finitely separating
maps.
\end{proof}
\begin{prop}\label{3.5}  Let $X$ be a QFS-space and $\mathcal{D}$ be a quasi-approximate identity consisting of
quasi-finitely separating maps for $X$, then $X$ is d-quasicontinuous space. In particular, $\{\delta(x): \delta\in \mathcal{D}, x\in X\}$ is quasibase of $X$.
\end{prop}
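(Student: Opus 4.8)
The plan is to establish d-quasicontinuity through the characterization in Theorem \ref{2.3}(3) and then refine the same construction to produce the quasibase. For a fixed $x\in X$, the natural candidate for the required directed family is $\mathcal{F}_{x}=\{\delta(x):\delta\in\mathcal{D}\}$. First I would check that $\mathcal{F}_{x}$ is a directed family of finite sets: given $\delta_{1}(x),\delta_{2}(x)\in\mathcal{F}_{x}$, directedness of $\mathcal{D}$ supplies some $\delta\in\mathcal{D}$ with $\delta(x)\subseteq\uparrow\delta_{1}(x)\cap\uparrow\delta_{2}(x)$, which is exactly the directedness condition for families in $\mathcal{P}^{w}(X)$. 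Next, Proposition \ref{3.3} gives $\delta(x)\ll_{d}x$ for every $\delta\in\mathcal{D}$, so $\mathcal{F}_{x}\subseteq fin_{d}(x)$. Finally, the hypothesis that $\mathcal{D}$ is a quasi-approximate identity means precisely $\mathcal{D}(x)\rightarrow_{\tau}x$, that is, $\mathcal{F}_{x}\rightarrow_{\tau}x$. Having a directed family $\mathcal{F}_{x}\subseteq fin_{d}(x)$ with $\mathcal{F}_{x}\rightarrow_{\tau}x$ for every $x$, Theorem \ref{2.3} then yields that $X$ is d-quasicontinuous.

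For the quasibase claim, set $\mathcal{B}=\{\delta(x):\delta\in\mathcal{D},\ x\in X\}$. By Definition \ref{2.8} I must show, for each $x$, that $\mathcal{B}\cap fin_{d}(x)$ is a directed family converging to $x$. Convergence is immediate, since $\mathcal{F}_{x}\subseteq\mathcal{B}\cap fin_{d}(x)$ and $\mathcal{F}_{x}\rightarrow_{\tau}x$ already; any open neighbourhood of $x$ containing a member of the smaller family also contains one of the larger family.

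The delicate point, and the step I expect to be the main obstacle, is the directedness of $\mathcal{B}\cap fin_{d}(x)$, because this intersection may contain sets of the form $\delta(y)$ with $y\neq x$ that bear no obvious relation to one another. My plan here is to pull such sets down to the controlled family $\mathcal{F}_{x}$ by means of Lemma \ref{2.4}. Concretely, given $F_{1},F_{2}\in\mathcal{B}\cap fin_{d}(x)$, each satisfies $F_{i}\ll_{d}x$, so applying Lemma \ref{2.4} with $G=F_{i}$, $H=\{x\}$ and the directed family $\mathcal{F}_{x}\rightarrow_{\tau}x$ produces $\delta_{i}(x)\in\mathcal{F}_{x}$ with $\delta_{i}(x)\subseteq\uparrow F_{i}$, whence $\uparrow\delta_{i}(x)\subseteq\uparrow F_{i}$. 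Directedness of $\mathcal{D}$ then gives $\delta\in\mathcal{D}$ with $\delta(x)\subseteq\uparrow\delta_{1}(x)\cap\uparrow\delta_{2}(x)\subseteq\uparrow F_{1}\cap\uparrow F_{2}$, and $\delta(x)\in\mathcal{B}\cap fin_{d}(x)$ by Proposition \ref{3.3}; this is the desired common refinement. Combining directedness with the convergence already noted shows that $\mathcal{B}$ is a quasibase, completing the argument.
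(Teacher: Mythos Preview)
Your argument is correct and follows the same route as the paper: for d-quasicontinuity you exhibit the directed family $\{\delta(x):\delta\in\mathcal{D}\}\subseteq fin_{d}(x)$ converging to $x$ and invoke Theorem \ref{2.3}, exactly as the paper does. For the quasibase claim the paper simply cites Proposition \ref{3.3} without further comment, whereas you go on to verify the directedness of $\mathcal{B}\cap fin_{d}(x)$ explicitly via Lemma \ref{2.4}; this extra step is a genuine addition and fills in a detail the paper leaves implicit.
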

\begin{proof}  Let $X$ be a QFS-space and $\mathcal{D}$ be a quasi-approximate identity for $X$ consisting
of quasi-finitely separating maps. Since $\mathcal{D}(x)=\{\delta(x)\}_{\delta\in \mathcal{D}}$ is directed and $\{\delta(x)\}_{\delta\in \mathcal{D}}\rightarrow_{\tau} x$, $X$ is d-quasicontinuous space by $fin_{d}(x)\rightarrow x$ and Theorem \ref{2.3}.

By Proposition \ref{3.3}, $\{\delta(x): \delta\in \mathcal{D}, x\in X\}$ is quasibase of $X$.
\end{proof}
\begin{lem}\label{3.6}  Let $X$ be a QFS-space and $\mathcal{D}$ be a quasi-approximate identity consisting of
quasi-finitely separating maps for $X$. For any open subset $U\subseteq X$, let $V_{\delta}=\{x\in X: \uparrow \delta (x)\subseteq U\}$ for any $\delta \in \mathcal{D}$. Then $V_{\delta}\in d(X)$, $\bigcup_{\delta\in \mathcal{D}}V_{\delta}=U$ and $\{V_{\delta}: \delta \in \mathcal{D}\}$ is directed family respect to inclusion order.
\end{lem}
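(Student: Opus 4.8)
The plan is to verify the three assertions separately, relying on the three defining properties of a quasi-finitely separating map in Definition \ref{3.1} together with the standing fact that every open set $U$ is an upper set in the specialization order, so $U=\uparrow U$; this last fact is what makes all the $\uparrow\delta(\cdot)\subseteq U$ conditions manageable.

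First I would establish that each $V_{\delta}$ is directed open. The upper-set property follows from condition (i): if $x\in V_{\delta}$ and $x\leq x'$, then $\delta(x')\subseteq\uparrow\delta(x)$, whence $\uparrow\delta(x')\subseteq\uparrow\delta(x)\subseteq U$, so $x'\in V_{\delta}$. For directed openness, take $(D,x)\in D(X)$ with $x\in V_{\delta}$. Then $\delta(x)\subseteq\uparrow\delta(x)\subseteq U$, and since $U$ is open, condition (iii), i.e. $\delta(D)\Rightarrow_{\tau}\delta(x)$, yields some $d\in D$ with $\delta(d)\subseteq U$; because $U=\uparrow U$ this gives $\uparrow\delta(d)\subseteq U$, that is, $d\in D\cap V_{\delta}$. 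Hence $V_{\delta}\in d(X)$.

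Next, for $\bigcup_{\delta\in\mathcal{D}}V_{\delta}=U$: the inclusion $\subseteq$ uses condition (ii), which supplies $y\in F_{\delta}$ with $x\in\uparrow y\subseteq\uparrow\delta(x)\subseteq U$, so every point of $V_{\delta}$ lies in $U$. For $\supseteq$, fix $x\in U$; since $\mathcal{D}\rightarrow_{\tau}1^{*}_{X}$, the directed family $\{\delta(x)\}_{\delta\in\mathcal{D}}$ converges to $x$, so $\delta(x)\subseteq U$ for some $\delta\in\mathcal{D}$, and again $U=\uparrow U$ gives $\uparrow\delta(x)\subseteq U$, i.e. $x\in V_{\delta}$. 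For the directedness of $\{V_{\delta}:\delta\in\mathcal{D}\}$ under inclusion I would invoke the directedness of $\mathcal{D}$: given $\delta_{1},\delta_{2}$, choose $\delta\in\mathcal{D}$ with $\delta(x)\subseteq\uparrow\delta_{1}(x)\cap\uparrow\delta_{2}(x)$ for all $x\in X$. Then for $x\in V_{\delta_{1}}$ one has $\uparrow\delta(x)\subseteq\uparrow\delta_{1}(x)\subseteq U$, so $x\in V_{\delta}$; symmetrically $V_{\delta_{2}}\subseteq V_{\delta}$, so $V_{\delta}$ is a common upper bound.

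The computations are all routine; the only point needing care, and the one I would flag as the crux, is the repeated passage from $\delta(\cdot)\subseteq U$ to $\uparrow\delta(\cdot)\subseteq U$, which is exactly where openness of $U$ (its being an upper set) is indispensable, together with the correct reading of condition (iii) as saying that $\delta$ transports the convergence $D\rightarrow_{\tau}x$ to the level of the finite sets $\delta(D)$. Everything else is bookkeeping with the $\uparrow(\cdot)$ operator.
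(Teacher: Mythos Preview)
Your proof is correct and follows essentially the same approach as the paper: both verify the three assertions by appealing to conditions (i), (ii), (iii) of Definition~\ref{3.1} and the directedness of $\mathcal{D}$ in exactly the same way, the only difference being the order of presentation (the paper proves $\bigcup_{\delta}V_{\delta}=U$ first and openness second) and your being slightly more explicit about using $U=\uparrow U$ to upgrade $\delta(\cdot)\subseteq U$ to $\uparrow\delta(\cdot)\subseteq U$.
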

\begin{proof}  First we show that $\bigcup_{\delta\in \mathcal{D}}V_{\delta}=U$. For any $x\in \bigcup_{\delta\in \mathcal{D}}V_{\delta}$, there exists $\delta\in \mathcal{D}$ such that $\uparrow \delta(x)\subseteq U$. Since $\delta$ is quasi-finitely separating map, $x\in \uparrow \delta(x)\subseteq U$. Conversely, for any $x\in U$, since $\mathcal{D}$ is a quasi-approximate identity, $\mathcal{D}(x)\rightarrow_{\tau} x$. Then there exists $\delta_{0}\in \mathcal{D}$ such that $\uparrow \delta_{0}(X)\subseteq U$. Thus $x\in V_{\delta_{0}}\subseteq\bigcup_{\delta\in \mathcal{D}}V_{\delta}$.

Then we show that $V_{\delta}$ is open set for any $\delta\in \mathcal{D}$. Let $x\in V_{\delta}$ and $x\leq y$. Since $\delta$ is quasi-finitely separating, $\delta(y)\subseteq \uparrow \delta(x)\subseteq U$, $y\in V_{\delta}$. Thus  $V_{\delta}$ is an upper set. For any directed subsets $D\subseteq L$ with
$D\rightarrow_{\tau} x\in V_{\delta}$. By $\delta(D)\Rightarrow_{\tau} \delta(x)\subseteq U$, $\delta(d)\subseteq U$ for some $d\in D$. Then $\uparrow \delta(d)\subseteq U$, $d\in V_{\delta}$. Hence $V_{\delta}$ is open set.

Let $V_{\delta_{1}}, V_{\delta_{2}}\in \{V_{\delta}: \delta \in \mathcal{D}\}$. Since $\mathcal{D}$ is directed, $\delta(x)\subseteq \uparrow\delta_{1}(x)\cap \delta_{2}(x)$ for any $x\in X$. If $x\in V_{\delta_{1}}$, then $\uparrow \delta(x)\subseteq \uparrow \delta_{1}(x)\subseteq U$ and $x\in V_{\delta}$. Hence $V_{\delta_{1}}\subseteq V_{\delta}$. Similarly, $V_{\delta_{2}}\subseteq V_{\delta}$. $\{V_{\delta}: \delta \in \mathcal{D}\}$ is directed family respect to inclusion order.
\end{proof}

\begin{lem}\label{3.7} Let $X$ be a QFS-space and $\mathcal{D}$ be a quasi-approximate identity consisting of
quasi-finitely separating maps for $X$. For any $F\in \mathcal{P}^{w}(X)$, $\uparrow F=\bigcap _{\delta\in \mathcal{D}}\bigcup_{x\in F}\uparrow \delta (x)$ and $\{\bigcup_{x\in F}\uparrow \delta (x):\delta\in \mathcal{D}\}$ is directed family.
\end{lem}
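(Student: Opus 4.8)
The plan is to establish the set equality by a double inclusion and then check directedness separately. Throughout I will lean on three facts: each $\uparrow\delta(x)$ is an upper set; the pointwise convergence $\mathcal{D}(x)\to_{\tau}x$ guarantees that for every open neighbourhood $U$ of $x$ there is some $\delta\in\mathcal{D}$ with $\delta(x)\subseteq U$; and, since the specialization closure satisfies $\cl\{z\}=\downarrow z$, the set $X\setminus\downarrow z$ is open for every $z\in X$.

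The inclusion $\uparrow F\subseteq\bigcap_{\delta\in\mathcal{D}}\bigcup_{x\in F}\uparrow\delta(x)$ is the routine half. Given $z\in\uparrow F$, I would fix $x_{0}\in F$ with $x_{0}\leq z$. For an arbitrary $\delta\in\mathcal{D}$, condition (ii) of Definition~\ref{3.1} gives $x_{0}\in\uparrow\delta(x_{0})$, and since $\uparrow\delta(x_{0})$ is an upper set containing $x_{0}\leq z$, we get $z\in\uparrow\delta(x_{0})\subseteq\bigcup_{x\in F}\uparrow\delta(x)$. As $\delta$ was arbitrary, $z$ lies in the intersection.

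The reverse inclusion is the heart of the matter, and I expect it to be the main obstacle, precisely because the intersection over $\mathcal{D}$ must be reconciled with the union over the finite set $F$. I would argue by contradiction: suppose $z\in\bigcap_{\delta\in\mathcal{D}}\bigcup_{x\in F}\uparrow\delta(x)$ but $z\notin\uparrow F$, so $x\not\leq z$ for every $x\in F$. Then for each $x\in F$ the open set $X\setminus\downarrow z$ is a neighbourhood of $x$, so pointwise convergence yields $\delta_{x}\in\mathcal{D}$ with $\delta_{x}(x)\subseteq X\setminus\downarrow z$; hence no element of $\delta_{x}(x)$ lies below $z$, i.e.\ $z\notin\uparrow\delta_{x}(x)$. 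Because $F$ is finite and $\mathcal{D}$ is directed, a finite induction on the directedness condition produces a single $\delta^{*}\in\mathcal{D}$ refining all the $\delta_{x}$, so that $\delta^{*}(w)\subseteq\bigcap_{x\in F}\uparrow\delta_{x}(w)$ for every $w\in X$; in particular $\uparrow\delta^{*}(x)\subseteq\uparrow\delta_{x}(x)$ for each $x\in F$. Consequently $z\notin\uparrow\delta^{*}(x)$ for every $x\in F$, whence $z\notin\bigcup_{x\in F}\uparrow\delta^{*}(x)$, contradicting the assumption that $z$ belongs to every member of the family. Therefore $z\in\uparrow F$.

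For directedness of $\{\bigcup_{x\in F}\uparrow\delta(x):\delta\in\mathcal{D}\}$ under inclusion, I would take $\delta_{1},\delta_{2}\in\mathcal{D}$ and use the directedness of $\mathcal{D}$ to obtain $\delta^{*}\in\mathcal{D}$ with $\delta^{*}(w)\subseteq\uparrow\delta_{1}(w)\cap\uparrow\delta_{2}(w)$ for all $w$. Since intersections of upper sets are again upper, applying $\uparrow$ gives $\uparrow\delta^{*}(x)\subseteq\uparrow\delta_{1}(x)$ and $\uparrow\delta^{*}(x)\subseteq\uparrow\delta_{2}(x)$ for each $x\in F$, so $\bigcup_{x\in F}\uparrow\delta^{*}(x)$ is contained in both $\bigcup_{x\in F}\uparrow\delta_{1}(x)$ and $\bigcup_{x\in F}\uparrow\delta_{2}(x)$. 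This exhibits the required common lower bound, establishing directedness. The one point I would verify carefully is this passage from $\delta^{*}(w)\subseteq\uparrow\delta_{1}(w)\cap\uparrow\delta_{2}(w)$ to the containment after applying $\uparrow$, which is exactly what makes the union over $F$ behave monotonically as $\delta$ is refined.
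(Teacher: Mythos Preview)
Your argument is correct. The contradiction for the reverse inclusion uses the same open set $X\setminus\downarrow z$ as the paper, but you take a more direct route: you apply the pointwise convergence $\mathcal{D}(x)\to_{\tau}x$ separately at each of the finitely many points $x\in F$ and then combine the resulting $\delta_{x}$ into a single $\delta^{*}$ using directedness of $\mathcal{D}$. The paper instead invokes Lemma~\ref{3.6} to write $X\setminus\downarrow y=\bigcup_{\delta\in\mathcal{D}}V_{\delta}$ as a directed union of open sets and then uses compactness of $\uparrow F$ to extract a single $V_{\delta_{0}}$ covering $\uparrow F$. Your version is self-contained and avoids the auxiliary lemma; the paper's version illustrates how Lemma~\ref{3.6} packages the same idea for reuse. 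The directedness claim and the easy inclusion are handled identically (the paper simply writes ``Obviously'' for both).
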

\begin{proof} Obviously, $\uparrow F\subseteq\bigcap _{\delta\in \mathcal{D}}\bigcup_{x\in F}\uparrow \delta (x)$. Conversely, if there exists $y\in\bigcap _{\delta\in \mathcal{D}}\bigcup_{x\in F}\uparrow \delta (x)$
but $y\notin \uparrow F$, then $\uparrow F\subseteq X\backslash \downarrow y\in \tau$. Then $X\backslash \downarrow y=\bigcup_{\delta\in \mathcal{D}}\{x\in X: \uparrow \delta (X)\subseteq X\backslash \downarrow y\}$ by Lemma \ref{3.6}. Since $\uparrow F$ is compact, there exists $\delta_{i}\subseteq \mathcal{D}$, $i=1, \cdots, n$, such that $\uparrow F\subseteq \bigcup_{i=1, \cdots, n}\{x\in X: \uparrow \delta_{i} (x)\subseteq X\backslash \downarrow y\}$. Since $\{V_{\delta}: \delta \in \mathcal{D}\}$ is directed family, there exists $\delta_{0}\in \mathcal{D}$ such that $\uparrow F\subseteq \{x\in X: \uparrow \delta_{0} (x)\subseteq X\backslash \downarrow y\}$ which implies $y\notin\bigcup_{x\in F}\uparrow \delta_{0} (x)$. Hence $y\in \uparrow F$.

Thus $\uparrow F=\bigcap _{\delta\in \mathcal{D}}\bigcup_{x\in F}\uparrow \delta (x)$. Obviously, $\{\bigcup_{x\in F}\uparrow \delta (x):\delta\in \mathcal{D}\}$ is directed family.
\end{proof}

\begin{defn}\label{3.8} A family $\mathcal{F}$ of maps from $X$ to $\mathcal{P}^{w}(X)$ is said to satisfy property $\mathcal{P}$ if given
any  $F_{i}\ll_{d}x_{i}$, $i=1,2$, there is a $\delta \in \mathcal{F}$ such that $F_{i}\ll_{d}\delta(x_{i})\ll_{d}x_{i}$.
\end{defn}

\begin{prop}\label{3.9} Let $X$ be a QFS-space and $\mathcal{D}$ be a quasi-approximate identity consisting of
quasi-finitely separating maps for $X$, Then $X$ is a d-quasicontinuous space and $\mathcal{D}$ satisfies property $\mathcal{P}$.
\end{prop}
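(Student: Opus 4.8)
The statement bundles a restatement with genuinely new content. The first assertion, that $X$ is d-quasicontinuous, is already established in Proposition~\ref{3.5}, so I would simply cite it. The whole substance lies in verifying that $\mathcal{D}$ satisfies property $\mathcal{P}$: given $F_{1}\ll_{d}x_{1}$ and $F_{2}\ll_{d}x_{2}$, I must produce a \emph{single} $\delta\in\mathcal{D}$ with $F_{i}\ll_{d}\delta(x_{i})\ll_{d}x_{i}$ for $i=1,2$. Here the upper approximations $\delta(x_{i})\ll_{d}x_{i}$ are free: Proposition~\ref{3.3} gives $\delta(x)\ll_{d}x$ for every quasi-finitely separating map and every point, so the real target is the two lower approximations $F_{i}\ll_{d}\delta(x_{i})$.

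The plan is to first handle each index $i$ in isolation and then merge. For a fixed $i$: since $X$ is d-quasicontinuous, Theorem~\ref{2.2}(2) gives that $\Uparrow_{d}F_{i}=(\uparrow F_{i})^{\circ}$ is open, and $F_{i}\ll_{d}x_{i}$ says exactly $x_{i}\in\Uparrow_{d}F_{i}$. Because $\mathcal{D}$ is a quasi-approximate identity, $\mathcal{D}(x_{i})\rightarrow_{\tau}x_{i}$, so applying convergence to the open neighbourhood $\Uparrow_{d}F_{i}$ yields some $\delta_{i}\in\mathcal{D}$ with $\delta_{i}(x_{i})\subseteq\Uparrow_{d}F_{i}$; that is, $F_{i}\ll_{d}h$ for every $h\in\delta_{i}(x_{i})$, which unwinds immediately to $F_{i}\ll_{d}\delta_{i}(x_{i})$ from the definition of $\ll_{d}$ for finite sets.

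Next I would invoke directedness of $\mathcal{D}$ to pick $\delta\in\mathcal{D}$ with $\delta(x)\subseteq\uparrow\delta_{1}(x)\cap\uparrow\delta_{2}(x)$ for all $x$, so in particular $\delta(x_{i})\subseteq\uparrow\delta_{i}(x_{i})$. The crucial step is then a monotonicity observation: if $F\ll_{d}H$ and $H'\subseteq\uparrow H$, then $F\ll_{d}H'$. Indeed, if a directed set $D$ converges to some $h'\in H'$, choose $h\in H$ with $h\leq h'$; since open sets are upper sets, every open neighbourhood of $h$ contains $h'$, so $D\rightarrow_{\tau}h$ as well, whence $D\cap\uparrow F\neq\emptyset$ by $F\ll_{d}H$. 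Applying this with $H=\delta_{i}(x_{i})$ and $H'=\delta(x_{i})$ gives $F_{i}\ll_{d}\delta(x_{i})$. Combining with $\delta(x_{i})\ll_{d}x_{i}$ from Proposition~\ref{3.3} produces $F_{i}\ll_{d}\delta(x_{i})\ll_{d}x_{i}$ for both $i$, which is precisely property $\mathcal{P}$.

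The step I expect to require the most care is the merging: one must verify that passing from $\delta_{i}(x_{i})$ to the larger upper set $\delta(x_{i})$ furnished by the common refinement does not destroy the approximation $F_{i}\ll_{d}\delta_{i}(x_{i})$. This is exactly what the monotonicity observation guarantees, and it rests on the elementary fact that in a $T_{0}$ space the convergence of a directed net passes to every point below the limit in the specialization order. Everything else reduces to bookkeeping with the definitions of $\ll_{d}$, $\Uparrow_{d}$, and directedness.
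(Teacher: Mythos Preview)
Your proof is correct and follows essentially the same route as the paper: cite Proposition~\ref{3.5} for d-quasicontinuity, use openness of $\Uparrow_{d}F_{i}$ (Theorem~\ref{2.2}(2)) together with $\mathcal{D}(x_{i})\rightarrow_{\tau}x_{i}$ to land some $\delta_{i}(x_{i})$ inside $\Uparrow_{d}F_{i}$, merge via directedness of $\mathcal{D}$, and finish with Proposition~\ref{3.3}. The paper is terser on the merging step (it jumps straight to a single $\delta_{0}$); your monotonicity lemma for $\ll_{d}$ is correct but can be replaced by the one-line observation that $\Uparrow_{d}F_{i}$ is an upper set, so $\delta(x_{i})\subseteq\uparrow\delta_{i}(x_{i})\subseteq\Uparrow_{d}F_{i}$ already gives $F_{i}\ll_{d}\delta(x_{i})$.
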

\begin{proof} Suppose $X$ is a QFS-space, then $X$ is d-quasicontinuous space by Proposition \ref{3.5}. Given any pairs $F_{i}\ll_{d}x_{i}$, $i=1,2$. Let $\mathcal{D}$ be a quasi-approximate identity for $X$ consisting of quasi-finitely separating maps. For any $i$, $1\leq i\leq 2$, $\mathcal{D}(x_{i})\rightarrow_{\tau} x_{i}\in \Uparrow F_{i}\in \tau$.
Then there exists $\delta_{0}\in \mathcal{D}$ such that $\delta_{0}(x_{i})\subseteq \Uparrow F_{i}$. Thus $F_{i}\ll_{d}\delta_{0}(x_{i})\ll_{d}x_{i}$ by Proposition \ref{3.3}. $\mathcal{D}$ satisfies property $\mathcal{P}$.
\end{proof}

\section{Properties of QFS-spaces}

In this section, we show that closed subspaces ,quasicontinuous projection spaces and continuous retracts of QFS-spaces are QFS-spaces. Further more, a kind of continuous retracts of QFS-space are constructed.

Let $X=(X, \tau)$ be a topological space, $A\subseteq X$, then $A=(A,\tau|_{A})$ is a subspace of $X$. For $x,y\in A$, it is easy to see that $x\leq y$ iff $x\leq_{\tau|_{A}}y$, where $\leq_{\tau|_{A}}$ is the specialization
order relative to $\tau|_{A}$.

\begin{thm}\label{4.1} Let $X$ be a QFS-space and $A\subseteq X$ be a directed closed set. Then $A$ is a QFS-space.
\end{thm}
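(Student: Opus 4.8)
The plan is to transfer a given quasi-approximate identity on $X$ to $A$ by intersecting the values of the maps with $A$. First I would record two elementary facts about $A$. Since $X$ is a directed space we have $d(X)=\tau$, so ``directed closed'' coincides with ``closed''; hence $A$ is a closed, and therefore a lower, subset of $X$, and $X\setminus A$ is open. Moreover $A$, with the subspace topology, is itself a directed space: if $C\subseteq A$ is directed closed in $A$, then any directed $D\subseteq C$ with $D\rightarrow_{\tau}x$ in $X$ has $x\in A$ (as $A$ is directed closed) and $D\rightarrow_{\tau|_{A}}x$, so $x\in C$; thus $C$ is directed closed in $X$, hence closed in $X$, hence closed in $A$. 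Together with $\tau|_{A}\subseteq d(A)$ this gives $d(A)=\tau|_{A}$. I will also use repeatedly that for a directed set $D\subseteq A$ and $x\in A$, convergence $D\rightarrow_{\tau|_{A}}x$ in the subspace is equivalent to $D\rightarrow_{\tau}x$ in $X$.

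Next, let $\mathcal{D}$ be a quasi-approximate identity for $X$ consisting of quasi-finitely separating maps. For each $\delta\in\mathcal{D}$ I would define $\delta_{A}\colon A\rightarrow\mathcal{P}^{w}(A)$ by $\delta_{A}(a)=\delta(a)\cap A$. This is well defined: by condition (ii) of Definition \ref{3.1} there is $y\in F_{\delta}$ with $a\in\uparrow y\subseteq\uparrow\delta(a)$, so some $d\in\delta(a)$ satisfies $d\leq a$, whence $d\in\downarrow a\subseteq A$ and $\delta_{A}(a)$ is a nonempty finite subset of $A$. Since $A$ is a lower set, for $z\in A$ the assumption $z\in\uparrow\delta(a)$ yields a witness $d\in\delta(a)$ with $d\leq z$ that automatically lies in $A$; this single observation drives the verification of conditions (i) and (ii) for $\delta_{A}$, giving $\delta_{A}(b)\subseteq\uparrow_{A}\delta_{A}(a)$ whenever $a\leq b$ and showing that $F_{\delta_{A}}=F_{\delta}\cap A\subseteq A$ is a quasi-finitely separating set for $\delta_{A}$ (here $\uparrow_{A}$ denotes upper closure taken inside $A$, so $\uparrow_{A}G=\uparrow G\cap A$ for $G\subseteq A$).

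The step I expect to be the main obstacle is condition (iii), because the values $\delta(a)$ may contain points outside $A$ and so $\delta_{A}$ is not simply a restriction. I would argue as follows. Let $D\subseteq A$ be directed with $D\rightarrow_{\tau|_{A}}x$, and suppose $\delta_{A}(x)\subseteq V$ with $V$ open in $A$, say $V=U\cap A$ for some open $U\subseteq X$. Then $\delta(x)\cap A\subseteq U$, while every point of $\delta(x)$ outside $A$ lies in the open set $X\setminus A$; hence $\delta(x)\subseteq U\cup(X\setminus A)$, an open subset of $X$. Since $D\rightarrow_{\tau}x$ in $X$, condition (iii) for $\delta$ produces $d\in D$ with $\delta(d)\subseteq U\cup(X\setminus A)$, and intersecting with $A$ gives $\delta_{A}(d)=\delta(d)\cap A\subseteq U\cap A=V$. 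This establishes $\delta_{A}(D)\Rightarrow_{\tau|_{A}}\delta_{A}(x)$, so $\delta_{A}$ is quasi-finitely separating on $A$.

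Finally I would check that $\mathcal{D}_{A}=\{\delta_{A}:\delta\in\mathcal{D}\}$ is a quasi-approximate identity for $A$. Directedness transfers from $\mathcal{D}$: if $\delta(x)\subseteq\uparrow\delta_{1}(x)\cap\uparrow\delta_{2}(x)$ for all $x$, then the same lower-set witness argument shows $\delta_{A}(a)\subseteq\uparrow_{A}(\delta_{1})_{A}(a)\cap\uparrow_{A}(\delta_{2})_{A}(a)$ for all $a\in A$. For pointwise convergence, given $a\in A$ and a subspace-open neighbourhood $V=U\cap A$ of $a$, the convergence $\mathcal{D}(a)\rightarrow_{\tau}a$ yields $\delta\in\mathcal{D}$ with $\delta(a)\subseteq U$, so $\delta_{A}(a)=\delta(a)\cap A\subseteq V$; thus $\mathcal{D}_{A}(a)\rightarrow_{\tau|_{A}}a$. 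Hence $\mathcal{D}_{A}\rightarrow_{\tau|_{A}}1^{*}_{A}$ consists of quasi-finitely separating maps, and $A$ is a QFS-space.
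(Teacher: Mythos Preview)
Your proof is correct and follows the same overall architecture as the paper's: define $\delta_{A}(a)=\delta(a)\cap A$ and verify that $\{\delta_{A}:\delta\in\mathcal{D}\}$ is a quasi-approximate identity of quasi-finitely separating maps on $A$. The verification of conditions (i), (ii), directedness, and pointwise convergence is essentially identical to the paper's, as is the argument that $A$ is a directed space.

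The one substantive difference is your handling of condition (iii). You observe directly that in a directed space ``directed closed'' coincides with ``closed'', so $X\setminus A$ is open and $\delta(x)\subseteq U\cup(X\setminus A)$ immediately; condition (iii) for $\delta$ then gives what you need. The paper instead invokes Proposition~\ref{3.5} to get that $X$ is d-quasicontinuous, and for each $a\in\delta(x)\setminus A$ chooses $F_{a}\ll_{d}a$ with $F_{a}\subseteq X\setminus A$, then works with the open set $U\cup\bigcup_{a}\Uparrow_{d}F_{a}$. Your route is strictly simpler: it avoids the appeal to quasicontinuity altogether and uses only the fact that $A$ is closed, which the paper is already implicitly using when it places each $F_{a}$ inside $X\setminus A$. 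Nothing is lost by your shortcut.
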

\begin{proof} Since $X$ is a QFS-domain, there exists a quasi-approximate identity $\mathcal{D}$ for
$X$ consisting of quasi-finitely separating maps. Let $\mathcal{D}^{*} = \{\delta_{A}: \delta\in \mathcal{D}\}$, where $\delta_{A}(x)=\delta(x)\cap A$ for any $x\in A$. Now we prove it in three steps.

(1) Let $\Gamma(X)$ be set of all directed closed subsets of $X$ and $\Gamma(A)$ be set of all directed closed subsets of $A$. Since $A$ is directed set, $\Gamma(A)=\{G\in \Gamma(A): G\subseteq A$.  Hence $d(A)=\{A\setminus G: G\in \Gamma (A)\}=\{A\setminus G: G\in \Gamma (X)\mid _{A}\}=\{A\setminus A\cap G: G\in \Gamma (X)\}=\{A\setminus G: G\in \Gamma (X)\}=\{A\cap X\setminus G: G\in \Gamma (X)\}=\{A\cap U: U\in d (X)\}=d(X)|_{A}$. $A$ is directed space.

(2) we verify that $\mathcal{D}^{*}$ satisfies the conditions in Definition \ref{3.1}. Let $X=(X,\tau)$, $A=(A, \tau|_{A})$.

(i) For any $a,b\in A$ with $a\leq b$, we have $\delta_{A}(b)=\delta(b)\cap A\subseteq \uparrow \delta(a)\cap A=\uparrow \delta_{A}(a)$.

(ii)For any $\delta\in \mathcal{D}$, there exists a finite set $F_{\delta}\subseteq X$ such that
for any $a\in A$, there exists $y\in F_{\delta}$ with $a\in \uparrow y\subseteq \uparrow \delta(a)$. By closedness of $A$, we have $y\in A$. Then $A\cap F_{\delta}\neq\emptyset$. Let $F_{\delta_{A}}=A\cap F_{\delta}$. Then
for any $a\in A$, there exists $y\in F_{\delta_{A}}\subseteq A$ such that $a\in \uparrow_{A}y =\uparrow y \cap A\subseteq \uparrow \delta(a)\cap A=\uparrow _{A}\delta_{A}(a)$.

(iii) Let $D$ be directed subset of $A$ and $D\rightarrow_{\tau|_{A}} x\in A$, then $D$ is directed subset of $X$ and $D\rightarrow_{\tau} x$. Hence $\delta(D)\Rightarrow_{\tau} \delta(x)$. Let $\delta_{A}(x)\subseteq A\cap U$, where $U\in d(X)$. If $a\notin \delta_{A}(x)$ and $a\in \delta(x)$, then $a\notin A$. Hence $a\in X\backslash A$. Since $X$ is quasicontinuous space, there exists $F_{a}\in \mathcal{P}^{w}(X)$ such that $F_{a}\ll a$ and $F_{a}\subseteq X\backslash A$. Let $M=\{a\in \delta(x):a\notin \delta_{A}(x)\}$ and $V=U\cup (\bigcup_{a\in M}\Uparrow F_{a})$, then $\delta(x)\subseteq V$ and $A\cap V=A\cap U$. By $\delta(D)\Rightarrow_{\tau} \delta(x)$ and $V\in d(X)$, $\delta(d)\subseteq V$ for some $d\in D$. Then $\delta(d)\cap A=\delta_{A}(d)\subseteq A\cap V=A\cap U$, hence $\delta_{A}(D)\Rightarrow_{\tau|_{A}} \delta_{A}(x)$.

By Definition \ref{3.1}, $\mathcal{D}^{*}$ is a set consisting of quasi-finitely separating maps on $A$.

(3) Then we show that $\mathcal{D}^{*}$ is a quasi-approximate identity for $A$. It is easy to see
that $\mathcal{D}^{*}$ is directed. For each $a\in U\in \tau|_{A}$, there exists $V\in \tau$ such that $U=A\cap V$. Since $a\in V$ and $\mathcal{D}(a)=\{\delta(a):\delta\in \mathcal{D}\}\rightarrow_{\tau} a$, $\delta^{0}(a)\subseteq V$ for some $\delta^{0}\in \mathcal{D}$. Then $\delta^{0}_{A}(a)=\delta^{0}(a)\cap A\subseteq A\cap V=U$. Hence $\{\delta_{A}(a):\delta\in \mathcal{D}\}\rightarrow_{\tau|_{A}} a$. This shows that $\mathcal{D}^{*}$ is a quasi-approximate identity for $A$.

By (1), (2) and (3), $A$ is a QFS-space.
\end{proof}

\begin{defn}\label{4.2} A function $f:X\rightarrow Y$ between directed space is quasicontinuous if $f$ is monotone, that is, if $H\subseteq \uparrow G$, then $f(H)\subseteq \uparrow f(G)$ and If $\mathcal{F}\subseteq \mathcal{P}^{w}(X)$ and $\mathcal{F}\Rightarrow_{\tau} H$, then $\{f(F): F\in \mathcal{F}\}\Rightarrow_{\tau} f(H)$.
\end{defn}
If $f:X\rightarrow Y$ is a quasicontinuous, then $f$ is a continuous.

\begin{prop}\label{4.3}  If A function $f:X\rightarrow Y$ between directed space is quasicontinuous, then

(1) $f$ is a continuous;

(2) If $\mathcal{F}\rightarrow_{\tau} x$, then $\{f(F): F\in \mathcal{F}\}\rightarrow_{\tau} f(x)$.
\end{prop}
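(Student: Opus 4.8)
The plan is to reduce both assertions to the single convergence clause of Definition \ref{4.2}, exploiting the fact that the family relation $\mathcal{F}\rightarrow_{\tau}x$ is precisely the instance $\mathcal{F}\Rightarrow_{\tau}\{x\}$ of the relation appearing there, since $\{x\}\subseteq U$ is the same as $x\in U$. Thus part (2) should be almost immediate, and part (1) will be deduced from it together with Proposition \ref{2.6}(2).

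For part (2) I would specialize Definition \ref{4.2} to $H=\{x\}$. Given a directed family $\mathcal{F}$ with $\mathcal{F}\rightarrow_{\tau}x$, rewrite this as $\mathcal{F}\Rightarrow_{\tau}\{x\}$; quasicontinuity then yields $\{f(F):F\in\mathcal{F}\}\Rightarrow_{\tau}f(\{x\})=\{f(x)\}$, which unwinds to exactly $\{f(F):F\in\mathcal{F}\}\rightarrow_{\tau}f(x)$. The one supplementary point to record is that $\{f(F):F\in\mathcal{F}\}$ is again a directed family, so that this conclusion is a genuine family-convergence; this follows from the monotonicity clause of Definition \ref{4.2}, because if $F\subseteq\uparrow F_{1}\cap\uparrow F_{2}$ then $f(F)\subseteq\uparrow f(F_{1})\cap\uparrow f(F_{2})$.

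For part (1) I would first extract ordinary monotonicity of $f$: if $x\leq y$ then $\{y\}\subseteq\uparrow x$, so the monotonicity clause gives $f(y)\in\uparrow f(x)$, i.e. $f(x)\leq f(y)$. To check that $f$ preserves directed limits, take $(D,x)\in D(X)$ and pass to the family of singletons $\mathcal{F}=\{\{d\}:d\in D\}$. This family is directed because $D$ is, and $\mathcal{F}\rightarrow_{\tau}x$ because $D\rightarrow_{\tau}x$; applying part (2) gives $\{\{f(d)\}:d\in D\}\rightarrow_{\tau}f(x)$, that is, $f(D)\rightarrow_{\tau}f(x)$. Since $f(D)$ is directed by monotonicity, $(f(D),f(x))\in D(Y)$, so $f$ is directed continuous in the sense of Definition \ref{2.5}. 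Because $X$ and $Y$ are directed spaces, Proposition \ref{2.6}(2) then upgrades directed continuity to topological continuity.

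The calculations are routine; the only real care needed is the bookkeeping between the two arrows $\rightarrow_{\tau}$ and $\Rightarrow_{\tau}$ and the translation of a convergent directed set into a convergent family of singletons. I expect the conceptual crux of part (1) to be the appeal to Proposition \ref{2.6}(2), which is what promotes preservation of directed limits to honest continuity rather than merely directed continuity.
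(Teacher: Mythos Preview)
The paper does not supply a proof of Proposition~\ref{4.3}; it states the result as an immediate consequence of Definition~\ref{4.2} (indeed the sentence ``If $f:X\rightarrow Y$ is a quasicontinuous, then $f$ is a continuous'' already appears, unproved, just before the proposition). Your argument is correct and is precisely the routine verification the paper leaves implicit: the identification of $\mathcal{F}\rightarrow_{\tau}x$ with $\mathcal{F}\Rightarrow_{\tau}\{x\}$ gives part~(2) directly, and the singleton-family trick together with Proposition~\ref{2.6}(2) yields part~(1). Your observation that $\{f(F):F\in\mathcal{F}\}$ must itself be directed (via the monotonicity clause) is a genuine point, since the paper only defines the notation $\mathcal{F}\rightarrow_{\tau}x$ for directed families; this is a detail the paper glosses over.
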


\begin{thm}\label{4.4} Let $X$ be a QFS-space, $f: X\rightarrow X$ be a quasicontinuous projection and $f(X)=(f(X), \tau|_{f(x)})$ be directed space. Then $f(X)$ is a QFS-space.
\end{thm}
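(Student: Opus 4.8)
The plan is to transport the QFS-structure of $X$ across the projection. I fix a quasi-approximate identity $\mathcal{D}$ for $X$ consisting of quasi-finitely separating maps, and write $Y=f(X)$ for the subspace, which is a directed space by hypothesis. Since $f$ is a projection, $f\circ f=f$, so every $y\in Y$ is a fixed point, $f(y)=y$. For each $\delta\in\mathcal{D}$ I define $\delta_{f}\colon Y\rightarrow \mathcal{P}^{w}(Y)$ by $\delta_{f}(y)=f(\delta(y))$; this is well defined because $\delta(y)\in\mathcal{P}^{w}(X)$ is finite and nonempty, so its image is a finite nonempty subset of $f(X)=Y$. Setting $\mathcal{D}_{f}=\{\delta_{f}:\delta\in\mathcal{D}\}$, I will show that $\mathcal{D}_{f}$ is a quasi-approximate identity for $Y$ consisting of quasi-finitely separating maps, which by Definition \ref{3.2} gives that $Y$ is a QFS-space.

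Next I would check the three clauses of Definition \ref{3.1} for each $\delta_{f}$. For (i), if $a\leq b$ in $Y$ then $a\leq b$ in $X$ (the specialization orders agree on subspaces), so $\delta(b)\subseteq\uparrow\delta(a)$; the monotonicity clause of quasicontinuity of $f$ (Definition \ref{4.2}) gives $f(\delta(b))\subseteq\uparrow f(\delta(a))$ in $X$, and intersecting with $Y$ yields the required inclusion in $Y$. For (ii), I would take $f(F_{\delta})$ as the quasi-finitely separating set of $\delta_{f}$. Given $y\in Y$, choose $z\in F_{\delta}$ with $y\in\uparrow z\subseteq\uparrow\delta(y)$. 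Monotonicity of $f$ on singletons together with $f(y)=y$ gives $f(z)\leq f(y)=y$, so $y$ lies above $f(z)\in f(F_{\delta})$; and since $z\in\uparrow\delta(y)$ there is $u\in\delta(y)$ with $u\leq z$, whence $f(u)\leq f(z)$ with $f(u)\in\delta_{f}(y)$, so $\uparrow f(z)\subseteq\uparrow\delta_{f}(y)$. Both inclusions pass to $Y$.

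The main obstacle is clause (iii), the directed-limit condition, where I must move an open set back and forth across $f$. Let $D\subseteq Y$ be directed with $D\rightarrow_{\tau|_{Y}}y$; since a net valued in a subspace converges to a point of the subspace iff it converges in $X$, we also have $D\rightarrow_{\tau}y$ in $X$, hence $\delta(D)\Rightarrow_{\tau}\delta(y)$. Suppose $\delta_{f}(y)\subseteq W$ with $W$ open in $Y$, say $W=U\cap Y$ for some $U\in\tau$. Then $f(\delta(y))\subseteq U$, i.e. $\delta(y)\subseteq f^{-1}(U)$, and $f^{-1}(U)$ is $\tau$-open because $f$ is continuous (Proposition \ref{4.3}(1)). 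Applying $\delta(D)\Rightarrow_{\tau}\delta(y)$ to the open set $f^{-1}(U)$ produces $d\in D$ with $\delta(d)\subseteq f^{-1}(U)$, so $\delta_{f}(d)=f(\delta(d))\subseteq U\cap Y=W$; this is exactly $\delta_{f}(D)\Rightarrow_{\tau|_{Y}}\delta_{f}(y)$. The crux is recognizing that it is the continuity of $f$ that lets the ambient separating property of $\delta$ be restricted to $Y$.

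Finally I would verify that $\mathcal{D}_{f}$ is a quasi-approximate identity. Directedness follows from that of $\mathcal{D}$: if $\delta(x)\subseteq\uparrow\delta_{1}(x)\cap\uparrow\delta_{2}(x)$ for all $x$, then applying $f$ and the monotonicity clause gives $\delta_{f}(y)\subseteq\uparrow(\delta_{1})_{f}(y)\cap\uparrow(\delta_{2})_{f}(y)$ in $Y$. For pointwise convergence, fix $y\in Y$; since $\mathcal{D}(y)=\{\delta(y)\}_{\delta\in\mathcal{D}}$ is a directed family with $\mathcal{D}(y)\rightarrow_{\tau}y$ and $f$ is quasicontinuous, Proposition \ref{4.3}(2) gives $\{f(\delta(y))\}_{\delta\in\mathcal{D}}\rightarrow_{\tau}f(y)=y$ in $X$, and since each $f(\delta(y))\subseteq Y$ this is convergence in $\tau|_{Y}$, i.e. $\mathcal{D}_{f}(y)\rightarrow_{\tau|_{Y}}y$. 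Together with the directed-space hypothesis on $f(X)$, the three verified clauses and the quasi-approximate identity property give that $f(X)$ is a QFS-space.
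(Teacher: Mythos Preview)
Your proof is correct and follows essentially the same route as the paper: define $\delta_{f}(y)=f(\delta(y))$ on $Y=f(X)$, check the three clauses of Definition~\ref{3.1} with separating set $f(F_{\delta})$, and use quasicontinuity of $f$ to obtain directedness and pointwise convergence of $\mathcal{D}_{f}$. The only cosmetic difference is in clause (iii), where the paper applies the quasicontinuity of $f$ directly to push $\delta(D)\Rightarrow_{\tau}\delta(y)$ forward, while you pull the open set back through the continuous $f$ via Proposition~\ref{4.3}(1); both arguments are valid and amount to the same thing.
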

\begin{proof} Since $X$ is a QFS-domain, there exists a quasi-approximate identity $\mathcal{D}$ for
$X$ consisting of quasi-finitely separating maps. Let $\mathcal{D}^{*} = \{f\circ \delta: \delta\in \mathcal{D}\}$, where $f\circ \delta(x)=f(\delta(x))$ for any $x\in X$. Firstly, we
verify that $\mathcal{D}^{*}$ satisfies the conditions in Definition \ref{3.1}. Let $X=(X,\tau)$, $f(X)=(f(X), \tau|_{f(X)})$.

(i) For any $a,b\in f(X)$ with $a\leq b$, we have $f\circ \delta(b) = f(\delta(b))\subseteq \uparrow f(\delta(a))=\uparrow f \circ\delta(a)$ for any $\delta \in \mathcal{D}$.

(ii)For any $\delta\in \mathcal{D}$, there exists a finite set $F_{\delta}\subseteq X$ such that
for any $x\in X$, there exists $y\in F_{\delta}$ with $x\in \uparrow y\subseteq \uparrow \delta(x)$. Noticing that $f$ is a projection, then for every $x\in f(X)$, we have $f(x) = x$. By the quasicontinuity of $f$ and $f(y)\in f(F_{\delta})$, we have that $x\in \uparrow_{f(X)} f(\uparrow y)=f(\uparrow y)\cap f(X)\subseteq f(\uparrow \delta(x))\cap f(X)=\uparrow_{f(X)} f\circ \delta(x)$. Hence , $\{f(y):y\in F_{\delta}\}$ is quasi-finitely separating set of $f\circ\delta$.

(iii) Let $D$ be a directed set of $f(X)$ and $D\rightarrow_{\tau|_{A}} x$, then $D$ is directed set of $X$ and $D\rightarrow_{\tau} x$. Hence $\delta (D)\Rightarrow_{\tau} \delta(x)$. Since $f: X\rightarrow X$ is quasicontinuous, $f\circ\delta (D)\Rightarrow_{\tau} f\circ\delta(x)$.

By Definition \ref{3.1}, $\mathcal{D}^{*}$ is a set consisting of quasi-finitely separating maps on $f(X)$.

Then we show that $\mathcal{D}^{*}$ is a quasi-approximate identity for $f(X)$. It is easy to see
that $\mathcal{D}^{*}$ is directed. For each $x\in U\in \tau|_{f(X)}$, there exists $V\in \tau$ such that $U=f(X)\cap V$. Since $x\in V$, $\mathcal{D}(x)=\{\delta(x):\delta\in \mathcal{D}\}\rightarrow_{\tau} x$. Since $f$ is quasicontinuous, $\{f(\delta(x)):\delta\in \mathcal{D}\}\rightarrow_{\tau} f(x)$. Hence $\{f\circ\delta(x):\delta\in \mathcal{D}\}\rightarrow_{\tau|_{f(X)}} x$ by $f(x)=x$.

This shows that $\mathcal{D}^{*}$ is a quasi-approximate identity for $f(X)$. So, $f(X)$ is a QFS-space.
\end{proof}
\begin{defn}\label{4.5} Let $X, Y$ be two topological space. $Y$ is called a continuous retract of $X$ if
there exist continuous functions $f: X\rightarrow Y$ and $g :Y\rightarrow X$ such that $f\circ g = 1_{Y}$.
\end{defn}
\begin{thm}\label{4.6} Let $X$ be a QFS-sapce and $Y$ be a directed space. If $Y$ is a continuous retract of $X$, then $Y$ is a QFS-spaces.
\end{thm}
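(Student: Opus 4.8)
The plan is to transport a quasi-approximate identity from $X$ to $Y$ along the retraction. Let $f:X\rightarrow Y$ and $g:Y\rightarrow X$ be the continuous maps with $f\circ g=1_{Y}$, and let $\mathcal{D}$ be a quasi-approximate identity for $X$ consisting of quasi-finitely separating maps. For each $\delta\in\mathcal{D}$ I would define $\delta':Y\rightarrow\mathcal{P}^{w}(Y)$ by $\delta'(y)=f(\delta(g(y)))$, and set $\mathcal{D}'=\{\delta':\delta\in\mathcal{D}\}$. Since $\delta(g(y))$ is a nonempty finite subset of $X$ and $f$ maps into $Y$, each $\delta'(y)$ is a legitimate member of $\mathcal{P}^{w}(Y)$. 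The goal is then to show that $\mathcal{D}'$ is a quasi-approximate identity for $Y$ consisting of quasi-finitely separating maps, whence $Y$ is a QFS-space by Definition \ref{3.2}. Throughout I would use that, being continuous between directed spaces, $f$ and $g$ are monotone for the specialization order and, by Proposition \ref{2.6}, are directed continuous, so they preserve limits of directed sets.

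First I would verify the three clauses of Definition \ref{3.1} for each $\delta'$. For clause (i), if $a\leq b$ in $Y$ then $g(a)\leq g(b)$, so $\delta(g(b))\subseteq\uparrow\delta(g(a))$ by clause (i) for $\delta$; applying the monotone $f$ gives $\delta'(b)\subseteq\uparrow\delta'(a)$. For clause (ii) I would take $F_{\delta'}=f(F_{\delta})$, a finite subset of $Y$. Given $y\in Y$, pick $z\in F_{\delta}$ with $g(y)\in\uparrow z\subseteq\uparrow\delta(g(y))$. Then $f\circ g=1_{Y}$ and monotonicity give $y=f(g(y))\geq f(z)$, while $z\geq v$ for some $v\in\delta(g(y))$ yields $f(z)\geq f(v)\in\delta'(y)$; hence $f(z)\in\uparrow\delta'(y)$ and so $y\in\uparrow f(z)\subseteq\uparrow\delta'(y)$ with $f(z)\in F_{\delta'}$.

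Clause (iii) is where continuity must be used carefully, and I expect it to be the main obstacle. Let $D\subseteq Y$ be directed with $D\rightarrow_{\tau}y$. Since $g$ is directed continuous, $g(D)$ is directed and $g(D)\rightarrow_{\tau}g(y)$, so $\delta(g(D))\Rightarrow_{\tau}\delta(g(y))$ by clause (iii) for $\delta$. If $\delta'(y)\subseteq U$ with $U$ open in $Y$, then $\delta(g(y))\subseteq f^{-1}(U)$, and $f^{-1}(U)$ is open in $X$ by continuity of $f$; hence $\delta(g(d))\subseteq f^{-1}(U)$ for some $d\in D$, giving $\delta'(d)=f(\delta(g(d)))\subseteq U$. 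Thus $\delta'(D)\Rightarrow_{\tau}\delta'(y)$, and each $\delta'$ is quasi-finitely separating. The delicate point is precisely this routing: one pulls the finite-set convergence $\Rightarrow_{\tau}$ back through the open preimage under $f$ and through the directed-limit preservation under $g$, rather than attempting to push $\Rightarrow_{\tau}$ forward directly.

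Finally I would check that $\mathcal{D}'$ is a quasi-approximate identity. Directedness of $\mathcal{D}'$ follows from that of $\mathcal{D}$: given $\delta_{1},\delta_{2}$, choose $\delta\in\mathcal{D}$ with $\delta(x)\subseteq\uparrow\delta_{1}(x)\cap\uparrow\delta_{2}(x)$ for all $x\in X$, evaluate at $x=g(y)$, and push through the monotone $f$ to obtain $\delta'(y)\subseteq\uparrow\delta_{1}'(y)\cap\uparrow\delta_{2}'(y)$. For the convergence $\mathcal{D}'\rightarrow 1^{*}_{Y}$, fix $y\in Y$ and an open $U\ni y$; then $g(y)\in f^{-1}(U)$ with $f^{-1}(U)$ open, and $\mathcal{D}(g(y))\rightarrow_{\tau}g(y)$ yields $\delta\in\mathcal{D}$ with $\delta(g(y))\subseteq f^{-1}(U)$, so $\delta'(y)\subseteq U$. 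Hence $\mathcal{D}'(y)\rightarrow_{\tau}y$ pointwise, and $\mathcal{D}'$ is the required quasi-approximate identity, so $Y$ is a QFS-space.
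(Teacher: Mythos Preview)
Your proof is correct and follows essentially the same approach as the paper: both transport the quasi-approximate identity $\mathcal{D}$ along the retraction via $\delta'(y)=f(\delta(g(y)))$ and verify Definition~\ref{3.1} and the quasi-approximate identity conditions in the same way, pulling clause~(iii) and the pointwise convergence back through the open preimage $f^{-1}(U)$. Your write-up is in fact somewhat more careful than the paper's (which contains a few notational slips in the analogous steps).
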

\begin{proof} Let $X$ be a QFS-space and $Y$ be a retract of $X$. Then there exist continuous
functions $f: X\rightarrow Y$ and $g :Y\rightarrow X$ such that $f\circ g = 1_{Y}$. We show that $Y$ is a QFS-space.
Suppose $\mathcal{D}$ is a quasi-approximate identity for $X$ consisting of quasi-finitely separating
maps. Let $\varepsilon: Y\rightarrow \mathcal{P}^{w}(Y)$ be $\varepsilon(y)=\{f(x): x\in \delta(g(y))\}$ for any $y\in Y$. We show that
$\mathcal{D}^{*}=\{\varepsilon: \delta\in \mathcal{D}\}$ is a quasi-approximate identity for $Y$ consisting of quasi-finitely separating maps.

(1) $\varepsilon$ is quasi-finitely separating map.

(i) For any $y_{1}, y_{2}\in Y$ and $y_{1}\leq y_{2}$. Since $g$ is monotone, $g(y_{1})\leq g(y_{2})$. Hence
$\varepsilon(y_{2})=\{f(x): x\in \delta(g(y_{2}))\} \subseteq\{f(x): x\in \delta(g(y_{1}))\}=\varepsilon(y_{1})$. So $\delta(g(y_{2}))=\{f(x): x\in \delta(g(y_{2}))\}\subseteq \{f(x): x\in \uparrow\delta(g(y_{1}))\}=\uparrow\{f(x): x\in \delta(g(y_{1}))\}$.

(ii)Since $\delta$ is quasi-finitely separating, there exists a finite set $F_{\delta}\subseteq X$ such that
for any $x\in X$, there exists $y\in F_{\delta}$ with $x\in \uparrow y\subseteq \uparrow \delta(x)$. Let $F_{\varepsilon}=f(F_{\delta})$. For any $y\in Y$, we
have $f(g(y)) = y$. Since $g(y)\in X$, there exists $z\in F_{\delta}$ such that $g(y) \in \uparrow z\subseteq \uparrow \delta(g(y))$, then $y\in \uparrow f(z)\subseteq \{f(x): x\in \delta(g(y))\}=\uparrow\varepsilon(y)$.

(iii)For any directed set $D\subseteq Y$ and $D\rightarrow_{\tau_{Y}} y$, then $g(D)\subseteq X$ is directed set and $g(D)\rightarrow_{\tau_{X}} g(y)$. Then $\delta(g(D))\Rightarrow_{\tau_{X}} \delta(g(y))$. Let $\varepsilon(y)=\{f(x): x\in \delta(g(y))\}\in V\in \tau_{Y}$, then $\delta(g(y))\in f^{-1}(V)$. By Definition \ref{3.1} and $\delta(g(D))\Rightarrow_{\tau_{X}} \delta(g(y))$, $\delta(g(d))\subseteq f^{-1}(V)$. That is $\varepsilon(d)=\{f(x): x\in \delta(g(d))\}\in V$, then $\varepsilon(D)\Rightarrow_{\tau_{Y}} \varepsilon(y)$.

By Definition \ref{3.1}, $\mathcal{D}^{*}$ is a set consisting of quasi-finitely separating maps on $Y$.

(2) Then we show that $\mathcal{D}^{*}$ is a quasi-approximate identity for $f(X)$. $\mathcal{D}^{*}$ is directed since $\mathcal{D}$ is directed. $\forall y\in Y$, $\mathcal{D}^{*}(y)=\{\varepsilon(y):\varepsilon\in \mathcal{D}^{*}\}=\{\{f(x): x\in \delta g(y)\}: \delta\in \mathcal{D}\}$. Let $y\in V\in \tau_{Y}$, then $f^{-1}(y)\in f^{-1}(V)\in \tau_{X}$. Since $\mathcal{D}(f^{-1}(y))\rightarrow_{\tau_{X}} f^{-1}(y)$, there exist $\delta \in \mathcal{D}$ such that $\delta (f^{-1}(y))\subseteq f^{-1}(V)$. Then $\delta (f^{-1}(y))= \delta ((g((y)))\subseteq g(V)$. Hence, $\varepsilon(y)=\{f(x): x\in \delta(g(y))\}\subseteq V$, that is $\varepsilon(y)\subseteq V$. Hence $\mathcal{D}^{*}(y)\rightarrow_{\tau_{Y}} x$.

This shows that $\mathcal{D}^{*}$ is a quasi-approximate identity for $f(X)$. So, $f(X)$ is a QFS-space.
\end{proof}

\begin{prop}\label{4.7} Let $X$ be a QFS-space, $A\in X$ be a lower bound of $\Uparrow b$ for some $b\in X$. Then $\{a\}\cup \Uparrow b$ is a continuous  retracts of $X$, that is,  $\{a\}\cup \Ua_{d} b$ is QFS-space.
\end{prop}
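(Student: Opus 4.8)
The plan is to realize $Y := \{a\}\cup\Ua_{d} b$ as a continuous retract of $X$ and then apply Theorem \ref{4.6}. Since $X$ is a QFS-space it is d-quasicontinuous by Proposition \ref{3.5}, so $\Ua_{d} b=\Uparrow_{d}\{b\}$ is an open, hence upper, subset of $X$; combined with the hypothesis that $a$ is a lower bound of $\Ua_{d} b$ this gives $\Ua_{d} b\subseteq\uparrow a$, so that $a$ is the least element of $Y$. I would give $Y$ the subspace topology $\tau|_{Y}$ and define the retraction pair by letting $g:Y\rightarrow X$ be the inclusion and $f:X\rightarrow Y$ be $f(x)=x$ when $x\in\Ua_{d} b$ and $f(x)=a$ otherwise. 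Then $f\circ g=1_{Y}$ is immediate and $g$ is continuous as a subspace inclusion, so the construction of the retraction itself is routine.

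Next I would check that $f$ is continuous. Once $X$ and $Y$ are both known to be directed spaces, Proposition \ref{2.6}(2) reduces this to directed continuity, i.e. monotonicity and preservation of directed limits. Monotonicity uses only that $\Ua_{d} b$ is an upper set and that $a\leq y$ for every $y\in\Ua_{d} b$. For limits, if $D\rightarrow_{\tau} x$ with $x\in\Ua_{d} b$, then $D$ is eventually inside the open set $\Ua_{d} b$, on which $f$ is the identity, so $f(D)\rightarrow_{\tau|_{Y}} x$; and if $x\notin\Ua_{d} b$ then $f(x)=a$, whose only open neighbourhood in $Y$ is $Y$ itself, so the convergence $f(D)\rightarrow_{\tau|_{Y}} a$ holds trivially.

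The main obstacle is verifying that $Y$, with the subspace topology, is itself a directed space, since this is exactly the hypothesis required to invoke Theorem \ref{4.6} and subspaces of directed spaces need not be directed in general. I would first describe $\tau|_{Y}$ explicitly: any open $U\subseteq X$ with $a\in U$ satisfies $\Ua_{d} b\subseteq\uparrow a\subseteq U$, whence $U\cap Y=Y$, so every open set of $Y$ other than $Y$ is an open-in-$X$ subset of $\Ua_{d} b$. To prove $d(Y)\subseteq\tau|_{Y}$, take $G\in d(Y)$. If $a\in G$, then $G$ is an upper set containing the least element $a$, forcing $G=Y$. If $a\notin G$, then $G\subseteq\Ua_{d} b$; using that $\Ua_{d} b$ is an upper open subset of $X$, one checks that $G$ is an upper set of $X$ and that any directed $D\subseteq X$ with $D\rightarrow_{\tau} x\in G$ is eventually in $\Ua_{d} b$, so its tail converges to $x$ in $\tau|_{Y}$ and meets $G$ by directed openness in $Y$; hence $G\in d(X)=\tau$ and $G$ is open in $Y$. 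This yields $d(Y)=\tau|_{Y}$, so $Y$ is a directed space.

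With $Y$ a directed space and $f,g$ continuous satisfying $f\circ g=1_{Y}$, Definition \ref{4.5} shows $Y$ is a continuous retract of the QFS-space $X$, and Theorem \ref{4.6} then gives that $Y=\{a\}\cup\Ua_{d} b$ is a QFS-space. I expect the delicate part to be entirely in establishing that $Y$ is a directed space; once the openness and upper-set properties of $\Ua_{d} b$ and the least-element role of $a$ are in place, the retraction maps, their continuity, and the final appeal to Theorem \ref{4.6} are straightforward.
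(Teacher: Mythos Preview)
Your proposal is correct and follows essentially the same approach as the paper: the same retraction pair $g=\text{inclusion}$ and $f(x)=x$ on $\Ua_{d} b$, $f(x)=a$ elsewhere, directed continuity of $f$ checked via the two cases $x\in\Ua_{d} b$ and $x\notin\Ua_{d} b$, the verification that $Y$ with the subspace topology is a directed space by splitting on whether $a$ lies in the candidate directed-open set, and the concluding appeal to Theorem~\ref{4.6}. Your write-up is in fact somewhat cleaner than the paper's, in that you make explicit the role of $a$ as least element of $Y$ and the fact that the only $\tau|_{Y}$-neighbourhood of $a$ is $Y$ itself.
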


\begin{proof} Let $A=\{a\}\cup \Ua_{d} b$ with $\tau|_{A}$, then $g: \{a\}\cup \Ua_{d} b\rightarrow X$, defined by $g(x)=x$. Let $f:X\rightarrow \{a\}\cup \Ua_{d} b$, defined by $f(x)=x$, $x\in A$ or $f(x)=a$, $x\notin A$. It is easy to see that $g$ is continuous.

Obviously, $f$ is monotone and $f=f^{2}$. Let $D\subseteq X$ be directed set and $D\rightarrow x$. (1) If $f(x)=x\in \Ua_{d} b$, then there exists $d_{1}\in D\cap \Ua_{d} b$ such that $b\ll d_{1}$. If $x\in A\cap U$ for some $U\in \tau$, then there exists $d_{2}\in D\cap U$ such that $d_{2}\in U$. Since $D$ is directed set, there exists $d_{0}\in D$ such that $d_{1},d_{2}\leq d_{0}$. Hence $d_{0}\in A\cap U\cap f(D)$, $f(D)\rightarrow_{\tau|_{A}} x$. (2) If $f(x)=a$, then $f(d)\geq f(x)=a$, $f(D)\rightarrow_{\tau|_{A}} a$. Hence, $f$ is continuous. Obviously, $f\circ g=1_{A}$, $A$ is a continuous retract of $X$.

Let $V$ be a directed open subset of $A$ and $D\subseteq A$ be a directed set.

(1) Let $D\rightarrow_{\tau|_{A}}x\in V\in d(A)$. If $x=a$, then $V=A$ and $V\in \tau|_{A}$;

(2) Let $D\subseteq X$ be a directed set and $D\rightarrow _{\tau}x$. If $x\in \Ua b$, $D\cap \Ua_{d} b\neq \emptyset$ by $D\rightarrow _{\tau}x$ and $\Ua_{d} b\in \tau$. Let $d\in D\cap \Ua_{d} b$, then $\uparrow d\cap D$ is directed. Let $x\in U\in \tau$, there exists $d'\in D$ such that $d'\in U$. Since $D$ is directed, $d,d'\leq d*$ for some $d*\in D$. Then $d*\in \uparrow d\cap D\cap \Ua_{d} b \cap U$ and $\uparrow d\cap D\rightarrow_{\tau|_{A}} x$. Since $V\in d(A)$, $\uparrow d\cap D\cap V\neq\emptyset$. That is $D\cap V\neq\emptyset$, $V\in d(X)$. Hence $V\in \tau|_{A}$.

By (1) and (2), $A$ is directed space. Hence $A$ is QFS-space by Theorem \ref{4.6}.
\end{proof}
Suppose $X, Y$ are two directed spaces. Let $X\times Y$ represents the Cartesian product of $X$ and $Y$, then
we have a natural partial order on it: $\forall(x_{1}, y_{1}), (x_{2}, y_{2})\in X\times Y$, $(x_{1}, y_{1})\leq (x_{2}, y_{2})\Leftrightarrow x_{1}\leq x_{2}, y_{1}\leq y_{2}$, which is called the pointwise order on $X\times Y$. Now, we define a topological space $X\otimes Y$ as follows\cite{13}:

(1) The underlying set of $X\otimes Y$ is $X\times Y$;

(2) The topology on $X\times Y$ is generated as follows: For each given directed set $D\subseteq X\times Y$ and
$(x, y)\in X\times Y$, $D\rightarrow _{X\oplus Y}(x, y)\in X \otimes Y \Leftrightarrow \pi_{1}D\rightarrow \tau_{X}x \in X$, $\pi_{2}D\rightarrow \tau_{Y}y \in Y$, that is, a subset $U\subseteq X\otimes Y$ is open if and only if for every directed limit defined as above
$D\rightarrow _{X\oplus Y}(x, y)\in U \Rightarrow U\cap D \neq\emptyset$.

\begin{prop}\label{4.8}\cite{13}
Suppose $X$ and $Y$ are two directed spaces.

(1) The topological space $X\otimes Y$ defined above is a directed space and satisfies the following
properties: The specialization order on $X\otimes Y$ equals to the pointwise order on $X\times Y$.

(2) Suppose $Z$ is another directed space, then $f :X\otimes Y\rightarrow Z$ is continuous if and only if it is
continuous in each variable separately.
\end{prop}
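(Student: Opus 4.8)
The plan is to regard $X\otimes Y$ as the directed space whose open sets are exactly those declared open by the convergence $\to_{X\oplus Y}$, and to check in turn that this collection is a topology, that its specialization order is the pointwise order, that $X\otimes Y$ is a directed space (this is all of (1)), and finally the separate-continuity characterization (2). Before anything else I would isolate one preliminary observation, since it is the lever for the whole argument. Testing the defining condition against a singleton $D=\{(x_2,y_2)\}$, one sees that $\{(x_2,y_2)\}\to_{X\oplus Y}(x_1,y_1)$ holds precisely when $x_1\leq x_2$ and $y_1\leq y_2$, because $\pi_i$ of a singleton converges in $X$ (resp.\ $Y$) if and only if the corresponding coordinate inequality holds. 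Consequently every open $U$ of $X\otimes Y$ is an upper set for the pointwise order, and this upgrades ``$D$ meets $U$'' to ``$D$ is eventually in $U$'': if $d_0\in D\cap U$ then every successor of $d_0$ in $D$ lies in $U$ by the upper-set property.

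With this in hand, the topology axioms follow: arbitrary unions are routine, and for $U_1\cap U_2$ with $D\to_{X\oplus Y}(x,y)\in U_1\cap U_2$ the defining condition gives $D\cap U_i\neq\emptyset$, hence $D$ is eventually in each $U_i$, so by directedness some $d^\ast$ lies in $D\cap U_1\cap U_2$. The projections are continuous, since $\pi_i^{-1}(V)$ meets any $D\to_{X\oplus Y}(x,y)$ landing in it directly from $\pi_iD\to$ the $i$-th coordinate. For the specialization order, if $(x_1,y_1)\leq(x_2,y_2)$ pointwise then the upper-set property gives the specialization inequality, while conversely applying the specialization inequality against the open sets $\pi_i^{-1}(V)$ recovers $x_1\leq x_2$ and $y_1\leq y_2$; so the two orders coincide. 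Finally, to see $X\otimes Y$ is a directed space I would show $\to_{X\oplus Y}$ and the topological directed convergence agree: $D\to_{X\oplus Y}(x,y)$ is eventually in every open neighbourhood by the upper-set property, hence $D\to_\tau(x,y)$, and conversely $D\to_\tau(x,y)$ forces $\pi_iD\to$ the coordinates by continuity of the projections. Since the convergences agree, the directed-open sets for $\tau$ are precisely the open sets, i.e.\ $d(X\otimes Y)=\tau$, which is (1).

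For (2), continuity equals directed continuity by Proposition~\ref{2.6}(2). If $f$ is continuous, then composing with the insertions $x\mapsto(x,y_0)$ and $y\mapsto(x_0,y)$ — each continuous, as it sends a convergent directed set to one whose coordinates converge (the fixed coordinate giving a constant convergent net) — shows $f$ is separately continuous. For the converse, separate continuity first forces $f$ monotone via $f(x_1,y_1)\leq f(x_2,y_1)\leq f(x_2,y_2)$. Then, given $D\to_{X\oplus Y}(x,y)$ and an open $W\ni f(x,y)$ in $Z$, I would run a two-stage argument: continuity of $f(\cdot,y)$ applied to $\pi_1D\to x$ yields $d_1\in D$ with $f(\pi_1d_1,y)\in W$; fixing $a_1=\pi_1d_1$, continuity of $f(a_1,\cdot)$ applied to $\pi_2D\to y$ yields $d_2\in D$ with $f(a_1,\pi_2d_2)\in W$; directedness gives $d^\ast\geq d_1,d_2$, and monotonicity together with $W$ being an upper set gives $f(d^\ast)\geq f(a_1,\pi_2d_2)\in W$, so $f(d^\ast)\in W$. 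Thus $f(D)$, which is directed since $f$ is monotone, converges to $f(x,y)$, whence $f$ is directed continuous and therefore continuous.

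The main obstacle is this last two-stage step. Because $D$ lives in the product, separate continuity only controls one coordinate at a time, and the argument closes only because the directedness of $D$ lets me amalgamate the two coordinate-wise witnesses while the upper-set property of open sets in $Z$ absorbs the overshoot produced by passing to $d^\ast$ through monotonicity. This same upper-set principle drives the proof of (1) as well, which is exactly why I would establish it as a standalone observation at the very start.
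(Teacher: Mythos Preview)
Your proof is correct, but there is nothing to compare it against: the paper does not prove Proposition~\ref{4.8} at all. The result is quoted verbatim from reference~[13] (Xie and Kou), as the citation attached to the statement indicates, and no argument is supplied in the present paper.

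As for the content of your argument, it is sound and essentially the standard one. The key maneuver---isolating at the outset that every $\to_{X\oplus Y}$-open set is an upper set for the pointwise order, so that ``$D$ meets $U$'' automatically upgrades to ``$D$ is eventually in $U$''---is exactly what makes the finite-intersection axiom and the two-stage separate-continuity argument go through cleanly. One very small remark on part~(2): in the converse direction you need $f(D)$ to \emph{converge} to $f(x,y)$, not merely to meet each open neighbourhood; you do note that $f(D)$ is directed (since $f$ is monotone) and that open sets in $Z$ are upper, which together give the upgrade from ``meets'' to ``eventually in'', so the argument closes. It would do no harm to make that last step explicit.
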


\begin{thm}\label{4.9}
Let $X, Y$ be QFS-space and $X$ be core-compact, then $X\otimes Y$ is QFS-space.
\end{thm}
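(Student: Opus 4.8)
The plan is to build a quasi-approximate identity on $X\otimes Y$ directly from those on the two factors. Fix quasi-approximate identities $\mathcal{D}_{X}$ for $X$ and $\mathcal{D}_{Y}$ for $Y$, each consisting of quasi-finitely separating maps. For $\delta\in\mathcal{D}_{X}$ and $\eta\in\mathcal{D}_{Y}$ I would define $\theta_{\delta,\eta}:X\otimes Y\rightarrow\mathcal{P}^{w}(X\otimes Y)$ by $\theta_{\delta,\eta}(x,y)=\delta(x)\times\eta(y)$; this is a nonempty finite set because $\delta(x)$ and $\eta(y)$ are. I would set $\mathcal{D}^{*}=\{\theta_{\delta,\eta}:\delta\in\mathcal{D}_{X},\ \eta\in\mathcal{D}_{Y}\}$ and aim to show $\mathcal{D}^{*}$ is a quasi-approximate identity for $X\otimes Y$ made of quasi-finitely separating maps. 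Throughout I will repeatedly use the pointwise-order identities $\uparrow(\delta(x)\times\eta(y))=\uparrow\delta(x)\times\uparrow\eta(y)$ and $\uparrow(p,q)=\uparrow p\times\uparrow q$, which follow from Proposition \ref{4.8}(1).

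The crucial input, and the step where core-compactness of $X$ enters, is a tube lemma for the tensor topology: \emph{if $X$ is core-compact then the boxes $V\times W$ with $V\in\tau_{X}$, $W\in\tau_{Y}$ form a base of $X\otimes Y$}, and consequently, for any open $U\subseteq X\otimes Y$ containing a finite product $F\times G$ there exist $V\in\tau_{X}$, $W\in\tau_{Y}$ with $F\subseteq V$, $G\subseteq W$ and $V\times W\subseteq U$. I expect this to be the main obstacle. To prove it I would first check that every slice $U^{y}=\{x:(x,y)\in U\}$ is open in $X$ (test it against a directed set $D'\rightarrow_{\tau_{X}}x_{0}$ with $x_{0}\in U^{y}$ by feeding $D'\times\{y\}$, which converges to $(x_{0},y)$ in $X\otimes Y$ by the definition of the tensor topology); then, given $(x_{0},y_{0})\in U$, use core-compactness to interpolate an open $V$ with $x_{0}\in V\ll U^{y_{0}}$ in the continuous lattice $\tau_{X}$, and verify that $W=\{y:V\subseteq U^{y}\}$ is open in $Y$, contains $y_{0}$, and satisfies $V\times W\subseteq U$. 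The passage from a single box at each point to a box containing the whole finite set $F\times G$ is then the routine finite tube argument, taking finite unions and intersections of the boxes attached to the finitely many points of $F\times G$.

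Granting the tube lemma, verifying that each $\theta_{\delta,\eta}$ is quasi-finitely separating is routine. Monotonicity (Definition \ref{3.1}(i)) follows factorwise from the monotonicity of $\delta$ and $\eta$. For the separating set (ii) I would take $F_{\theta}=F_{\delta}\times F_{\eta}$, which is finite: given $(x,y)$, choose $p\in F_{\delta}$ and $q\in F_{\eta}$ with $x\in\uparrow p\subseteq\uparrow\delta(x)$ and $y\in\uparrow q\subseteq\uparrow\eta(y)$, so that $(p,q)\in F_{\theta}$ and $(x,y)\in\uparrow(p,q)=\uparrow p\times\uparrow q\subseteq\uparrow\delta(x)\times\uparrow\eta(y)=\uparrow\theta_{\delta,\eta}(x,y)$. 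For condition (iii) I would take a directed $D\rightarrow_{\tau}(x,y)$ in $X\otimes Y$ with $\theta_{\delta,\eta}(x,y)\subseteq U$; by the tube lemma pick $V\times W\subseteq U$ with $\delta(x)\subseteq V$ and $\eta(y)\subseteq W$. Since $\pi_{1}D\rightarrow_{\tau_{X}}x$ and $\pi_{2}D\rightarrow_{\tau_{Y}}y$ are directed, condition (iii) for $\delta$ and for $\eta$ gives $d_{1},d_{2}\in D$ with $\delta(\pi_{1}d_{1})\subseteq V$ and $\eta(\pi_{2}d_{2})\subseteq W$; directing up to some $r\geq d_{1},d_{2}$ in $D$ and using (i) to keep $\delta(\pi_{1}r)\subseteq V$, $\eta(\pi_{2}r)\subseteq W$ (open sets are upper sets), I get $\theta_{\delta,\eta}(r)\subseteq V\times W\subseteq U$, i.e. $\theta_{\delta,\eta}(D)\Rightarrow_{\tau}\theta_{\delta,\eta}(x,y)$.

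Finally I would check that $\mathcal{D}^{*}$ is a quasi-approximate identity. Directedness is inherited factorwise: given $\theta_{\delta_{1},\eta_{1}},\theta_{\delta_{2},\eta_{2}}$, pick $\delta_{3}$ below $\delta_{1},\delta_{2}$ in $\mathcal{D}_{X}$ and $\eta_{3}$ below $\eta_{1},\eta_{2}$ in $\mathcal{D}_{Y}$, and the inclusion $\uparrow(\delta_{3}(x)\times\eta_{3}(y))\subseteq\uparrow(\delta_{i}(x)\times\eta_{i}(y))$ shows $\theta_{\delta_{3},\eta_{3}}$ is a lower bound. Pointwise convergence $\mathcal{D}^{*}(x,y)\rightarrow_{\tau}(x,y)$ is again the tube lemma: for open $U\ni(x,y)$ choose a box $V\times W\subseteq U$, then $\delta$ with $\delta(x)\subseteq V$ and $\eta$ with $\eta(y)\subseteq W$ from the two factor identities, whence $\theta_{\delta,\eta}(x,y)\subseteq V\times W\subseteq U$. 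Together with Definition \ref{3.2} this shows $X\otimes Y$ is a QFS-space. The only genuinely nontrivial ingredient is the core-compactness tube lemma of the second paragraph; everything else is bookkeeping with the pointwise order.
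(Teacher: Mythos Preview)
Your approach is essentially the paper's: both define the product separating maps $\theta_{\delta,\eta}(x,y)=\delta(x)\times\eta(y)$ and use core-compactness of $X$ to control the topology of $X\otimes Y$. The paper simply invokes Theorem~4.2 of \cite{13} to conclude that $X\otimes Y=X\times Y$ (so open boxes form a base) and then declares the verification ``easy''; you instead re-derive the box-base property directly via a tube-lemma argument and carry out the verifications in full. So there is no genuine difference in strategy, only in how the core-compactness step is packaged.

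One small technical correction in your tube lemma: with $W=\{y:V\subseteq U^{y}\}$ the directed-openness check only goes through at the distinguished point $y_{0}$ (where you have $V\ll U^{y_{0}}$), not at an arbitrary $y\in W$. The clean fix is to set $W=\{y:V\ll U^{y}\}$ using the way-below of $\tau_{X}$; then for directed $E\rightarrow y\in W$, interpolate $V\ll V'\ll U^{y}$, use $U^{y}\subseteq\bigcup_{e\in E}U^{e}$ (from slice-openness and convergence of $\{x\}\times E$) to get $V'\subseteq U^{e}$ for some $e$, and conclude $V\ll U^{e}$, i.e.\ $e\in W$. With this adjustment your argument is complete.
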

\begin{proof}
Let $X, Y$ be QFS-space and. Suppose $\mathcal{D}$, $\mathcal{E}$ be quasi-approximate identity for $X$, $Y$ consisting of quasi-finitely separating
maps. Let $\mathcal{D}*=\{(\delta, \varepsilon):\delta\in \mathcal{D}, \varepsilon\in \mathcal{E}\}$. Since $X$ is core-compact, $X\otimes Y = X \times Y$ by Theorem 4.2 in \cite{13}.

It is easy to verify that $\mathcal{D}*$ is quasi-approximate identity for $X\otimes Y$ consisting of quasi-finitely separating
maps.
\end{proof}

\section{QFS-space and Convex Powerspace of directed space}

\begin{defn}\cite{25}\label{5.1}
Let $X$ be a directed space.

(1) Denote $\mathcal{Q}_{fin}(X)=\{\uparrow F:F\in \mathcal{P}^{w}(X)\}$. Define an order $\leq_{\mathcal{Q}}$ on $\mathcal{Q}_{fin}(X)$: $\uparrow F_{1}\leq_{\mathcal{Q}}\uparrow F_{2}\Leftrightarrow \uparrow F_{2}\subseteq \uparrow F_{1}$.

(2)Let $\mathcal{F}=\{\uparrow F: F\in \mathcal{P}^{w}(X)\}\subseteq \mathcal{Q}_{fin}(X)$ be a directed set(respect to order  $\leq_{\mathcal{Q}}$) and $\uparrow H\subseteq\mathcal{Q}_{fin}(X)$. Define a convergence notation
$\mathcal{F}\Rightarrow_{\mathcal{Q}}\uparrow H \Leftrightarrow $there exists finite directed sets $D_{1}, \cdots, D_{n}\subseteq X$ such that

1. $H\cap \{x: D_{i}\rightarrow x\}\neq\emptyset$ for any $i=1 , \cdots, n$;

2. $H\subseteq \{x: D_{i}\rightarrow x, i= , \cdots, n\}$;

3. $\forall (d_{1},\cdots, d_{n})\in \prod^{n}_{1}D_{i}$, there exists some $\uparrow F\in \mathcal{F}$ such that $\uparrow F\in \bigcup^{n}_{1}\uparrow d_{i}$.

(3)A subset $\mathcal{U}\subseteq \mathcal{Q}_{fin}(X)$ is called a $\Rightarrow_{\mathcal{Q}}$ convergence open set of $\mathcal{Q}_{fin}(X)$ if and only if for any directed subset $\mathcal{F}$ of $\mathcal{Q}_{fin}(X)$ and $\uparrow H\in \mathcal{Q}_{fin}(X)$, $\mathcal{F}\Rightarrow_{\mathcal{Q}}\uparrow H\in \mathcal{U}$ implies $\mathcal{F}\cap \mathcal{U}\neq \emptyset$. Denote all $\Rightarrow_{\mathcal{Q}}$ convergence open set of $\mathcal{Q}_{fin}(X)$ by $\mathcal{O}_{\Rightarrow_{\mathcal{Q}}}(\mathcal{Q}_{fin}(X))$.
\end{defn}

\begin{prop}\cite{25}\label{5.2}
Let $X$ be a directed space, then

(1) $(\mathcal{Q}_{fin}(X), \mathcal{O}_{\Rightarrow_{\mathcal{Q}}}(\mathcal{Q}_{fin}(X)))$ is a directed space, that is, $\mathcal{O}_{\Rightarrow_{\mathcal{Q}}}(\mathcal{Q}_{fin}(X))=d(\mathcal{Q}_{fin}(X))$ and the specialization order $\leq$ of $\mathcal{O}_{\Rightarrow_{\mathcal{Q}}}(\mathcal{Q}_{fin}(X))$ equals to $\leq_{\mathcal{Q}}$;

(2) $(\mathcal{Q}_{fin}(X), \mathcal{O}_{\Rightarrow_{\mathcal{Q}}}(\mathcal{Q}_{fin}(X)))$ respect to the set union
operation $\cup$ is a directed deflationary semilattice;

(3)Suppose $X$ is a directed space, then $(\mathcal{Q}_{fin}(X), \mathcal{O}_{\Rightarrow_{\mathcal{Q}}}(\mathcal{Q}_{fin}(X)))$ is the upper powerspace
of $X$, that is, endowed with topology $\mathcal{O}_{\Rightarrow_{\mathcal{Q}}}(\mathcal{Q}_{fin}(X))$, $(\mathcal{Q}_{fin}(X), \cup)\cong P_{U}(X)$.

\end{prop}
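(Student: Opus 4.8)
The plan is to establish the three parts in order, with essentially all the technical weight falling on part (1), and the other two parts reducing to it. Throughout I write $\eta\colon X\to\mathcal{Q}_{fin}(X)$ for the unit $\eta(x)=\uparrow x$, and I use repeatedly that $\uparrow F_1\cup\uparrow F_2=\uparrow(F_1\cup F_2)\in\mathcal{Q}_{fin}(X)$, so the underlying set is closed under finite unions. First I would pin down the specialization order of $\mathcal{O}_{\Rightarrow_{\mathcal{Q}}}(\mathcal{Q}_{fin}(X))$. By definition $\uparrow F_1\leq\uparrow F_2$ in the specialization order iff $\uparrow F_2$ lies in every $\Rightarrow_{\mathcal{Q}}$-convergence open set containing $\uparrow F_1$. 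To evaluate this I feed the constant family $\{\uparrow F_2\}$ into Definition~\ref{5.1}(2) using the principal directed sets $D_i=\{h_i\}$, where $F_1=\{h_1,\dots,h_n\}$; since $\{h_i\}\to_\tau x$ for all $x\le h_i$, conditions~1 and~2 hold with limit set $\bigcup_i\downarrow h_i$, and condition~3 (read as $\uparrow F\subseteq\bigcup_i\uparrow d_i$) reduces to $\uparrow F_2\subseteq\uparrow F_1$. Thus $\{\uparrow F_2\}\Rightarrow_{\mathcal{Q}}\uparrow F_1$ exactly when $\uparrow F_2\subseteq\uparrow F_1$, and openness of $\mathcal{U}$ forces $\uparrow F_2\in\mathcal{U}$ whenever $\uparrow F_1\in\mathcal{U}$; separating the two sets by a convergence-open set when $\uparrow F_2\not\subseteq\uparrow F_1$ gives the converse. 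Hence the specialization order is precisely $\leq_{\mathcal{Q}}$.

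The remaining and genuinely hard point of part (1) is the identity $\mathcal{O}_{\Rightarrow_{\mathcal{Q}}}(\mathcal{Q}_{fin}(X))=d(\mathcal{Q}_{fin}(X))$. I would isolate this in a single \emph{bridging lemma}: for a directed family $\mathcal{F}\subseteq\mathcal{Q}_{fin}(X)$ and $\uparrow H\in\mathcal{Q}_{fin}(X)$, the family $\mathcal{F}$ converges to $\uparrow H$ in the topology $\mathcal{O}_{\Rightarrow_{\mathcal{Q}}}(\mathcal{Q}_{fin}(X))$ if and only if $\mathcal{F}\Rightarrow_{\mathcal{Q}}\uparrow H$. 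One direction is immediate from the very definition of a $\Rightarrow_{\mathcal{Q}}$-convergence open set: if $\mathcal{F}\Rightarrow_{\mathcal{Q}}\uparrow H$ then every convergence-open neighbourhood of $\uparrow H$ meets $\mathcal{F}$, so $\mathcal{F}$ converges topologically. The difficult converse requires, from mere topological convergence of $\mathcal{F}$, the manufacture of the covering directed sets $D_1,\dots,D_n$ demanded by Definition~\ref{5.1}(2). Here I would use finiteness of $H$ together with the fact that each $h\in H$ is a directed limit in the ambient directed space $X$ (Remark~\ref{2.1} and the definition of $d(X)$) to assemble the $D_i$, and then use saturation of the members of $\mathcal{F}$ to meet condition~3 along a tail. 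Once the lemma holds, ``directed open'' and ``$\Rightarrow_{\mathcal{Q}}$-convergence open'' coincide tautologically, yielding $d(\mathcal{Q}_{fin}(X))=\mathcal{O}_{\Rightarrow_{\mathcal{Q}}}(\mathcal{Q}_{fin}(X))$ and so the directed-space claim.

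For part (2), commutativity, associativity and idempotency of $\cup$ are inherited from set union through $\uparrow F_1\cup\uparrow F_2=\uparrow(F_1\cup F_2)$, and the deflationary law is the inequality $\uparrow F\leq_{\mathcal{Q}}\uparrow F\cup\uparrow G$, which holds because $\uparrow(F\cup G)\subseteq\uparrow F$. The only substantive point is directed-continuity of $\cup\colon\mathcal{Q}_{fin}(X)\otimes\mathcal{Q}_{fin}(X)\to\mathcal{Q}_{fin}(X)$; by Proposition~\ref{4.8}(2) it suffices to check continuity in each variable separately, and by the bridging lemma this reduces to showing that $\mathcal{F}\Rightarrow_{\mathcal{Q}}\uparrow H$ forces $\{\uparrow F\cup\uparrow K:\uparrow F\in\mathcal{F}\}\Rightarrow_{\mathcal{Q}}\uparrow(H\cup K)$ for fixed $\uparrow K$, which follows by adjoining the principal directed sets generating $\uparrow K$ to the witnessing family $D_1,\dots,D_n$ for $\mathcal{F}\Rightarrow_{\mathcal{Q}}\uparrow H$.

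Finally, for part (3) I would verify that $(\mathcal{Q}_{fin}(X),\cup)$ with the unit $\eta$ enjoys the universal property defining the upper powerspace $P_U(X)$: for every directed deflationary semilattice $Z$ and every continuous $f\colon X\to Z$ there is a unique continuous semilattice homomorphism $\bar f$ with $\bar f\circ\eta=f$, necessarily $\bar f(\uparrow F)=\bigcup_{x\in F}f(x)$ taken in $Z$. Well-definedness (independence of the generating set $F$) and the homomorphism property are formal consequences of idempotency and the order description from part (1), while continuity of $\bar f$ is once more reduced, via the bridging lemma, to pushing a $\Rightarrow_{\mathcal{Q}}$-witness through the continuous $f$ and recombining in $Z$. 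Uniqueness of objects satisfying a universal property then delivers the isomorphism $(\mathcal{Q}_{fin}(X),\cup)\cong P_U(X)$. I expect the main obstacle throughout to be the converse direction of the bridging lemma in paragraph two, namely converting abstract topological convergence of a directed family of finitely generated saturated sets back into the explicit covering-by-directed-sets data of Definition~\ref{5.1}(2); every other step either reduces to that lemma or is a routine order or set computation.
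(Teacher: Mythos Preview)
The paper does not prove Proposition~\ref{5.2}: it is quoted, with citation, from reference~[25] (Xie, Chen, Kou, \emph{Power structures of directed spaces}), and no argument appears in the present paper. There is therefore nothing in the paper to compare your proposal against.

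On the merits of your sketch, the step you flag as ``the genuinely hard point'' --- the converse direction of your bridging lemma, i.e.\ that topological convergence of a directed $\mathcal{F}$ in $\mathcal{O}_{\Rightarrow_{\mathcal{Q}}}$ forces $\mathcal{F}\Rightarrow_{\mathcal{Q}}\uparrow H$ --- is in fact \emph{not needed} for the equality $\mathcal{O}_{\Rightarrow_{\mathcal{Q}}}(\mathcal{Q}_{fin}(X))=d(\mathcal{Q}_{fin}(X))$. One inclusion, $\mathcal{O}_{\Rightarrow_{\mathcal{Q}}}\subseteq d(\mathcal{Q}_{fin}(X))$, is automatic since every open set is directed-open. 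For the reverse, take $\mathcal{U}\in d(\mathcal{Q}_{fin}(X))$ and a directed $\mathcal{F}$ with $\mathcal{F}\Rightarrow_{\mathcal{Q}}\uparrow H\in\mathcal{U}$; the \emph{easy} direction of your lemma gives $\mathcal{F}\to_\tau\uparrow H$, and directed-openness of $\mathcal{U}$ then yields $\mathcal{F}\cap\mathcal{U}\neq\emptyset$, so $\mathcal{U}\in\mathcal{O}_{\Rightarrow_{\mathcal{Q}}}$. Thus part~(1) only needs the forward implication, which you rightly call immediate. Your proposed proof of the converse (``each $h\in H$ is a directed limit \dots\ assemble the $D_i$'') does not visibly produce witnesses satisfying condition~3 of Definition~\ref{5.1}(2), so that step is a real gap --- but a gap in a lemma you can avoid.

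For parts~(2) and~(3) you can likewise bypass the hard direction: rather than pushing topological limits forward, check directly that preimages of $\Rightarrow_{\mathcal{Q}}$-convergence-open sets under $\cup_{K}$ (and under $\bar f$) are $\Rightarrow_{\mathcal{Q}}$-convergence-open, which again reduces to showing that $\mathcal{F}\Rightarrow_{\mathcal{Q}}\uparrow H$ implies $\{\uparrow F\cup\uparrow K:\uparrow F\in\mathcal{F}\}\Rightarrow_{\mathcal{Q}}\uparrow(H\cup K)$, exactly the computation you already outlined.
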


Let $X$ be a directed space and $\uparrow G, \uparrow H\in \mathcal{Q}_{fin}(X)$, then $\uparrow G\ll  \uparrow H\Leftrightarrow \forall \mathcal{F}\Rightarrow_{\mathcal{Q}}\uparrow H$, $\uparrow F\subseteq \uparrow G$ for some $\uparrow F\in \mathcal{F}$ by the definition of way below.
In this paper, the way below relation with respect to $\leq\mathcal{_{Q}}$ and $\Rightarrow_{\mathcal{Q}}$ is denoted by $\ll_{\mathcal{Q}}$.
Let $\uparrow G\ll_{\mathcal{Q}} \uparrow H$. Since $\uparrow H\leq_{\mathcal{Q}}\uparrow x$ for any $x\in H$, $\uparrow G\ll_{\mathcal{Q}} \uparrow x$.

The following lemma shows that the way below relation on the poset $\mathcal{Q}_{fin}(X)$ agrees with the way-below relation defined for finite subsets of directed space.

\begin{prop}\label{5.3}
Let $X$ be a directed space and $\uparrow G, \uparrow H\in \mathcal{Q}_{fin}(X)$, then $\uparrow G\ll_{\mathcal{Q}} \uparrow H$ iff $G\ll H$.
\end{prop}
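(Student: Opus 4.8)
The plan is to prove the two implications separately, in both directions exploiting the ``transversal'' description of $\Rightarrow_{\mathcal{Q}}$-convergence in Definition \ref{5.1}(2). Throughout I write $H=\{h_1,\dots,h_m\}$ (recall $H\in\mathcal{P}^{w}(X)$ is finite and nonempty) and use two elementary facts: $\uparrow G\leq_{\mathcal{Q}}\uparrow F$ unwinds to $\uparrow F\subseteq\uparrow G$, and if $d\in\uparrow G$ then $\uparrow d\subseteq\uparrow G$ since $\uparrow G$ is an upper set.

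First I would show $\uparrow G\ll_{\mathcal{Q}}\uparrow H\Rightarrow G\ll_d H$. Take any directed $D\subseteq X$ with $D\rightarrow_{\tau}h$ for some $h\in H$, and relabel so that $h=h_1$. Put $D_1=D$ and $D_i=\{h_i\}$ for $2\leq i\leq m$; each $D_i$ is directed and satisfies $D_i\rightarrow_{\tau}h_i\in H$ (using $\{y\}\rightarrow_{\tau}x\Leftrightarrow x\leq y$). I then form the family of transversals $\mathcal{F}=\{\uparrow\{d_1,\dots,d_m\}:d_i\in D_i,\ 1\leq i\leq m\}$. The two things to check are that $\mathcal{F}$ is $\leq_{\mathcal{Q}}$-directed, which follows by refining two transversals coordinatewise through the directedness of each $D_i$, and that $\mathcal{F}\Rightarrow_{\mathcal{Q}}\uparrow H$ is witnessed by $D_1,\dots,D_m$: conditions 1 and 2 of Definition \ref{5.1}(2) hold since $D_i\rightarrow_{\tau}h_i$, and condition 3 is immediate because the up-set of each transversal is itself a member of $\mathcal{F}$. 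Now $\uparrow G\ll_{\mathcal{Q}}\uparrow H$ supplies some $\uparrow\{d_1,\dots,d_m\}\in\mathcal{F}$ with $\uparrow\{d_1,\dots,d_m\}\subseteq\uparrow G$; its first coordinate gives $d_1\in D\cap\uparrow G$, so $D\cap\uparrow G\neq\emptyset$, proving $G\ll_d H$.

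For the converse $G\ll_d H\Rightarrow\uparrow G\ll_{\mathcal{Q}}\uparrow H$, I would take an arbitrary $\leq_{\mathcal{Q}}$-directed $\mathcal{F}$ with $\mathcal{F}\Rightarrow_{\mathcal{Q}}\uparrow H$ and let $D_1,\dots,D_n$ be the witnessing finite directed sets. By condition 1 each $D_i$ converges to some element of $H$, so $G\ll_d H$ gives $D_i\cap\uparrow G\neq\emptyset$; choose $d_i^{*}\in D_i\cap\uparrow G$ for every $i$. Feeding the tuple $(d_1^{*},\dots,d_n^{*})\in\prod_{i}D_i$ into condition 3 yields $\uparrow F\in\mathcal{F}$ with $\uparrow F\subseteq\bigcup_i\uparrow d_i^{*}$. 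Since each $d_i^{*}\in\uparrow G$ forces $\uparrow d_i^{*}\subseteq\uparrow G$, we get $\uparrow F\subseteq\uparrow G$, i.e.\ $\uparrow G\leq_{\mathcal{Q}}\uparrow F$, which is exactly the witness demanded by $\ll_{\mathcal{Q}}$.

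I expect the forward direction to be the main obstacle: the whole argument rests on producing a concrete directed family $\mathcal{F}$ that provably $\Rightarrow_{\mathcal{Q}}$-converges to $\uparrow H$, and verifying $\leq_{\mathcal{Q}}$-directedness together with the three clauses of Definition \ref{5.1}(2) is where all the bookkeeping lives. By contrast the converse is essentially a coordinatewise application of $G\ll_d H$ followed by a single appeal to condition 3. The one delicate point is choosing the auxiliary directed sets in the forward direction so that conditions 1 and 2 become automatic, which is precisely why I take the singletons $D_i=\{h_i\}$ for the indices other than the one carrying $D$.
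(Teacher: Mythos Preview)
Your proof is correct and follows essentially the same path as the paper; the converse direction is argued identically. The only cosmetic difference is in the forward implication: the paper first reduces to a singleton target by observing that $\uparrow G\ll_{\mathcal{Q}}\uparrow H$ together with $\uparrow H\leq_{\mathcal{Q}}\uparrow x$ yields $\uparrow G\ll_{\mathcal{Q}}\uparrow x$ for each $x\in H$, and then uses the simpler family $\{\uparrow\{d\}:d\in D\}$ (with the single witness $D$) in place of your transversal family padded by the singletons $D_i=\{h_i\}$.
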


\begin{proof}
If $\uparrow G\ll_{\mathcal{Q}} \uparrow H$, $D$ is a directed subset of $X$ and $D\rightarrow x\in H$. Let $\mathcal{F}=\{\{d\}: d\in D\}$, then (1) $\{x\}\cap \{x: D\rightarrow x\}\neq\emptyset$; (2) $\{x\} \subseteq \{x: D\rightarrow x\}$; (3)$\forall d\in D$, there exists some $F=\{d\}\in \mathcal{F}$ such that $\uparrow F\subseteq \uparrow d$. By the Definition \ref{5.1}, $\mathcal{F}\Rightarrow_{\mathcal{Q}}\uparrow x$. Since $\uparrow G\ll_{\mathcal{Q}} \uparrow x$, $\uparrow F\subseteq \uparrow G$ for some $F\in \mathcal{F}$, that is $F=\{d\}\subseteq \uparrow G$. So, $\uparrow \cap D\neq\emptyset$ and $G\ll x$. By the arbitrariness of $x$, $G\ll H$.

Let $G\ll H$ and $\mathcal{F}\Rightarrow_{\mathcal{Q}}\uparrow H$. Then there exists finite directed sets $D_{1}, \cdots, D_{n}\subseteq X$ such that

1. $H\cap \{x: D_{i}\rightarrow x\}\neq\emptyset$ for any $i=1, \cdots, n$;

2. $H\subseteq \{x: D_{i}\rightarrow x, i= 1, \cdots, n\}$;

3. $\forall (d_{1},\cdots, d_{n})\in \prod^{n}_{1}D_{i}$, there exists some $\uparrow F\in \mathcal{F}$ such that $\uparrow F\in \bigcup^{n}_{1}\uparrow d_{i}$.

For any $i$, let $x_{i}\in H\cap \{x: D_{i}\rightarrow x\}$, then $D_{i}\rightarrow x_{i}\in H$. Thus $D_{i}\cap \uparrow G\neq \emptyset$. Let $d_{i}\in D_{i}\cap \uparrow G$, then there exists some $\uparrow F\in \mathcal{F}$ such that $\uparrow F\in \bigcup^{n}_{1}\uparrow d_{i}\subseteq \uparrow G$, that is $ F\subseteq \uparrow G$. So, $\uparrow G\ll_{\mathcal{Q}}\uparrow H$.
\end{proof}

\begin{prop}\label{5.4}
Let $X$ be a continuous space, $\mathcal{F}\subseteq \mathcal{Q}_{fin}(X)$ and $ H\in\mathcal{P}^{w}(X)$. Then $\mathcal{F}\Rightarrow_{\mathcal{Q}}\uparrow H$ iff $H\subseteq U$ implies there exists $ F\in \mathcal{F}$ such that $F\subseteq \uparrow F\subseteq U$ for any $U\in d(X)$.
\end{prop}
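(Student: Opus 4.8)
The plan is to prove the two implications separately, and I expect only the reverse one to need the continuity hypothesis. Throughout I write $H=\{x_{1},\dots,x_{n}\}$ for the distinct points of $H$, and I use that, $X$ being a directed space, $d(X)=\tau$ and every $U\in d(X)$ is an upper set by Remark \ref{2.1}(3).

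For the forward direction, suppose $\mathcal{F}\Rightarrow_{\mathcal{Q}}\uparrow H$, witnessed by finite directed sets $D_{1},\dots,D_{n}$ satisfying conditions 1--3 of Definition \ref{5.1}(2), and let $U\in d(X)$ with $H\subseteq U$. By condition 1 each $D_{i}$ converges to some point of $H$, hence to a point of $U$; since $U$ is directed open this forces $D_{i}\cap U\neq\emptyset$, so I may pick $d_{i}\in D_{i}\cap U$. Applying condition 3 to the tuple $(d_{1},\dots,d_{n})\in\prod_{i}D_{i}$ yields $\uparrow F\in\mathcal{F}$ with $\uparrow F\subseteq\bigcup_{i}\uparrow d_{i}$; as each $d_{i}\in U$ and $U=\uparrow U$, we get $\uparrow F\subseteq U$, whence $F\subseteq\uparrow F\subseteq U$. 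This is exactly the right-hand condition, and it used neither continuity nor condition 2.

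For the reverse direction I will build the witnessing directed sets by hand. Assume that for every $U\in d(X)$ with $H\subseteq U$ there is $\uparrow F\in\mathcal{F}$ with $\uparrow F\subseteq U$, and set $D_{i}=\Da_{d}x_{i}$. Since $X$ is continuous, each $D_{i}$ is directed and $D_{i}\rightarrow_{\tau}x_{i}$, which gives conditions 1 and 2 at once (each $x_{i}\in H$ is a limit of the corresponding $D_{i}$). For condition 3, fix a tuple $(d_{1},\dots,d_{n})\in\prod_{i}D_{i}$, so $d_{i}\ll_{d}x_{i}$ and hence $x_{i}\in\Ua_{d}d_{i}$. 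Because $X$ is (d-quasi)continuous, each $\Ua_{d}d_{i}$ is open, so $W=\bigcup_{i}\Ua_{d}d_{i}$ is a directed-open set with $H\subseteq W$. The hypothesis now supplies $\uparrow F\in\mathcal{F}$ with $\uparrow F\subseteq W$, and since $\Ua_{d}d_{i}\subseteq\uparrow d_{i}$ I conclude $\uparrow F\subseteq\bigcup_{i}\uparrow d_{i}$, which is condition 3. Hence $\mathcal{F}\Rightarrow_{\mathcal{Q}}\uparrow H$.

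The main obstacle is this reverse direction, and specifically condition 3: for an arbitrary choice of approximants $d_{i}\ll_{d}x_{i}$ I must manufacture a single directed-open neighborhood of $H$ lying inside $\bigcup_{i}\uparrow d_{i}$, so that the hypothesis can be invoked. This is where continuity is essential — it guarantees both that $\Da_{d}x_{i}$ is a genuine directed set converging to $x_{i}$ (so the $D_{i}$ exist and satisfy conditions 1 and 2) and that each $\Ua_{d}d_{i}$ is open (so $W$ is an admissible neighborhood). The one delicate point of reading is condition 2: the construction $D_{i}=\Da_{d}x_{i}$ makes each $x_{i}$ a limit of its own $D_{i}$, so it verifies condition 2 in the sense that every point of $H$ is a limit of some $D_{i}$.
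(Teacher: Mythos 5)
Your proof is correct and follows essentially the same route as the paper's: the forward direction picks $d_{i}\in D_{i}\cap U$ from conditions 1 and 3, and the reverse direction takes $D_{i}=\Da_{d}x_{i}$ and covers $H$ by $\bigcup_{i}\Ua_{d}d_{i}$ before invoking the hypothesis. Your explicit remark on how condition 2 must be read is a helpful clarification that the paper glosses over with ``obviously.''
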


\begin{proof}
Let $\mathcal{F}\Rightarrow_{\mathcal{Q}}\uparrow H$ and $H\subseteq U\in d(X)$, then there exists finite directed sets $D_{1}, \cdots, D_{n}\subseteq X$ such that

1. $H\cap \{x: D_{i}\rightarrow x\}\neq\emptyset$ for any $i=1 , \cdots, n$;

2. $H\subseteq \{x: D_{i}\rightarrow x, i=1 , \cdots, n\}$;

3. $\forall (d_{1},\cdots, d_{n})\in \prod^{n}_{1}D_{i}$, there exists some $ F\in \mathcal{F}$ such that $\uparrow F\in \bigcup^{n}_{1}\uparrow d_{i}$.

Hence for any $i$, $D_{i}\rightarrow h$ for some $h\in H$.  Since $h\in U$, $D_{i}\cap U\neq \emptyset$. Let $d_{i}\in D_{i}\cap U$, then there exists some $\uparrow F\in \mathcal{F}$ such that $\uparrow F\subseteq \bigcup^{n}_{1}\uparrow d_{i}\subseteq U$, that is, $F\subseteq \uparrow F\subseteq U$.

If $H\subseteq U$ implies there exists $\uparrow F\in \mathcal{F}$ such that $F\subseteq\uparrow F\subseteq U$ for any $U\in d(X)$. For any $h\in H$, since $X$ is continuous space, $\Da h$ is directed set and converges to $x$. Let $H=\{h_{1},\cdots ,h_{n}\}$ and $D_{i}=\Da h_{i}$. Obviously, $H\cap \{x: D_{i}\rightarrow x\}\neq\emptyset$ for any $i=1 , \cdots, n$ and $H\subseteq \{x: D_{i}\rightarrow x, i= 1, \cdots, n\}$. $\forall (d_{1},\cdots, d_{n})\in \prod^{n}_{1}D_{i}$, $d_{i}\ll h_{i}$. Since $H \subseteq\bigcup_{1}^{n}\Ua d_{i}\in d(x)$, then there exists some $ F\in \mathcal{F}$ such that $\uparrow F\in\bigcup_{1}^{n}\Ua d_{i}\subseteq \bigcup^{n}_{1}\uparrow d_{i}$. Thus $\mathcal{F}\Rightarrow_{\mathcal{Q}}\uparrow F$ be Definition \ref{5.1}.
\end{proof}

\begin{prop}\label{5.5}
Let $X$ be a continuous QFS-space, then $\mathcal{Q}_{fin}(X)$ is a FS-space.
\end{prop}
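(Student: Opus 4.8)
The plan is to lift a quasi-approximate identity on $X$ to an honest approximate identity of finitely separating maps on $\mathcal{Q}_{fin}(X)$, and then to invoke the fact recorded after Definition \ref{2.7} that a finitely separated directed space is automatically a c-space, hence an FS-space; since $\mathcal{Q}_{fin}(X)$ is already a directed space by Proposition \ref{5.2}(1), it suffices to produce the required maps. So I would fix a quasi-approximate identity $\mathcal{D}$ for $X$ consisting of quasi-finitely separating maps and, for each $\delta\in\mathcal{D}$, define $\widehat{\delta}:\mathcal{Q}_{fin}(X)\to\mathcal{Q}_{fin}(X)$ by $\widehat{\delta}(\uparrow H)=\uparrow\bigcup_{h\in H}\delta(h)$. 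First I would check this is well defined and $\leq_{\mathcal{Q}}$-monotone: if $\uparrow H_{2}\subseteq\uparrow H_{1}$, then condition (i) of Definition \ref{3.1} forces $\uparrow\bigcup_{h\in H_{2}}\delta(h)\subseteq\uparrow\bigcup_{h\in H_{1}}\delta(h)$, which gives both well-definedness and monotonicity. The continuity hypothesis on $X$ would be used only through Proposition \ref{5.4}, which replaces the cumbersome relation $\Rightarrow_{\mathcal{Q}}$ by the clean statement that $\mathcal{F}\Rightarrow_{\mathcal{Q}}\uparrow H$ holds exactly when every $U\in d(X)$ containing $H$ contains some $\uparrow F\in\mathcal{F}$.

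The heart of the argument is that each $\widehat{\delta}$ is finitely separating. Let $F_{\delta}\subseteq X$ be a quasi-finitely separating set for $\delta$ and set $F_{\widehat{\delta}}=\{\uparrow K:\emptyset\ne K\subseteq F_{\delta}\}$, a finite subset of $\mathcal{Q}_{fin}(X)$. Given any $\uparrow H$, condition (ii) supplies for each $h\in H$ some $y_{h}\in F_{\delta}$ with $h\in\uparrow y_{h}\subseteq\uparrow\delta(h)$; putting $K=\{y_{h}:h\in H\}$ I get $H\subseteq\uparrow K$ and $\uparrow K\subseteq\bigcup_{h\in H}\uparrow\delta(h)=\widehat{\delta}(\uparrow H)$, that is $\widehat{\delta}(\uparrow H)\leq_{\mathcal{Q}}\uparrow K\leq_{\mathcal{Q}}\uparrow H$ with $\uparrow K\in F_{\widehat{\delta}}$. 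This is exactly the finitely separating condition of Definition \ref{2.7} transported to $\mathcal{Q}_{fin}(X)$.

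Next I would verify continuity of each $\widehat{\delta}$, which I expect to be the main obstacle, since it is where the convergence relation $\Rightarrow_{\mathcal{Q}}$ must be controlled. The plan is to show directly that $\widehat{\delta}^{-1}(\mathcal{U})$ is a $\Rightarrow_{\mathcal{Q}}$-convergence open set for every open $\mathcal{U}$, which reduces to proving $\mathcal{F}\Rightarrow_{\mathcal{Q}}\uparrow H\Rightarrow\widehat{\delta}(\mathcal{F})\Rightarrow_{\mathcal{Q}}\widehat{\delta}(\uparrow H)$. Given $U\in d(X)$ with $\widehat{\delta}(\uparrow H)=\uparrow\bigcup_{h}\delta(h)\subseteq U$, I would pass to $W=\{x\in X:\uparrow\delta(x)\subseteq U\}$, which is open by Lemma \ref{3.6}; every $h\in H$ lies in $W$, so $H\subseteq W$, and Proposition \ref{5.4} yields $\uparrow F\in\mathcal{F}$ with $F\subseteq W$, whence $\widehat{\delta}(\uparrow F)=\bigcup_{x\in F}\uparrow\delta(x)\subseteq U$. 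Applying Proposition \ref{5.4} in the reverse direction gives $\widehat{\delta}(\mathcal{F})\Rightarrow_{\mathcal{Q}}\widehat{\delta}(\uparrow H)$; as $\widehat{\delta}(\mathcal{F})$ is directed by monotonicity, the membership $\widehat{\delta}(\uparrow H)\in\mathcal{U}$ then forces $\widehat{\delta}(\mathcal{F})\cap\mathcal{U}\ne\emptyset$, i.e. $\mathcal{F}\cap\widehat{\delta}^{-1}(\mathcal{U})\ne\emptyset$, so $\widehat{\delta}$ is continuous.

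Finally I would check that $\widehat{\mathcal{D}}=\{\widehat{\delta}:\delta\in\mathcal{D}\}$ is an approximate identity. Directedness in the pointwise order transfers from $\mathcal{D}$: if $\delta(x)\subseteq\uparrow\delta_{1}(x)\cap\uparrow\delta_{2}(x)$ for all $x$, then $\widehat{\delta}(\uparrow H)\subseteq\widehat{\delta_{1}}(\uparrow H)\cap\widehat{\delta_{2}}(\uparrow H)$, i.e. $\widehat{\delta_{1}},\widehat{\delta_{2}}\leq\widehat{\delta}$. For pointwise convergence $\widehat{\mathcal{D}}\to 1_{\mathcal{Q}_{fin}(X)}$, fix $\uparrow H$ and $U\in d(X)$ with $H\subseteq U$; since $\mathcal{D}(h)\to_{\tau}h$, each $h\in H$ satisfies $\delta_{h}(h)\subseteq U$ for some $\delta_{h}\in\mathcal{D}$, and a single $\delta\in\mathcal{D}$ dominating the finitely many $\delta_{h}$ gives $\bigcup_{h\in H}\delta(h)\subseteq U$, hence $\widehat{\delta}(\uparrow H)\subseteq U$. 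By Proposition \ref{5.4} this yields $\widehat{\mathcal{D}}(\uparrow H)\Rightarrow_{\mathcal{Q}}\uparrow H$, and because convergence open sets are upper sets and $\widehat{\mathcal{D}}(\uparrow H)$ is directed, this upgrades to topological convergence $\widehat{\mathcal{D}}(\uparrow H)\to\uparrow H$. Thus $\widehat{\mathcal{D}}$ is an approximate identity of finitely separating continuous maps, so $\mathcal{Q}_{fin}(X)$ is finitely separated and therefore, by the remark following Definition \ref{2.7}, an FS-space.
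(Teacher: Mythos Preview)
Your proof is correct and follows essentially the same construction as the paper: the same lifted maps $\widehat{\delta}(\uparrow H)=\uparrow\bigcup_{h\in H}\delta(h)$, the same separating set $\{\uparrow K:\emptyset\ne K\subseteq F_{\delta}\}$, and the same use of Proposition~\ref{5.4} for both the continuity check and the convergence $\widehat{\mathcal{D}}(\uparrow H)\to\uparrow H$. The only difference is that in the continuity step you invoke Lemma~\ref{3.6} to pass to the open set $W=\{x:\uparrow\delta(x)\subseteq U\}$, whereas the paper unfolds the definition of $\Rightarrow_{\mathcal{Q}}$ and applies condition~(iii) of Definition~\ref{3.1} directly to each witnessing directed set $D_i$; this is a cosmetic repackaging rather than a different idea.
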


\begin{proof}
Let $X$ be a QFS-space and $\mathcal{D}$ be a quasi-approximate identity for $X$ consisting of quasi-finitely separating
maps. Let $\varepsilon: \mathcal{Q}_{fin}(X)\rightarrow \mathcal{P}^{w}(\mathcal{Q}_{fin}(X))$ be $\varepsilon(\uparrow F)=\uparrow \bigcup_{x\in F}\delta(x)$. Now we show that
$\mathcal{D}^{*}=\{\varepsilon: \delta\in \mathcal{D}\}$ is a quasi-approximate identity for $\mathcal{Q}_{fin}(X)$ consisting of quasi-finitely separating maps.

(1) $\varepsilon$ is quasi-finitely separating map.

(i) For any $\uparrow F_{1}, \uparrow F_{2}\in Y$ and $\uparrow F_{2}\subseteq \uparrow F_{1}$. $\varepsilon(\uparrow F_{2})=\uparrow \bigcup_{x\in F_{2}}\delta(x)=\bigcup_{x\in \uparrow F_{2}}\uparrow \delta(x)\subseteq \bigcup_{x\in \uparrow F_{1}}\uparrow \delta(x)=\uparrow \bigcup_{x\in F_{2}}\delta(x)=\varepsilon(\uparrow F_{1})$ .

(ii) Since $\delta$ is quasi-finitely separating, there exists a finite set $F_{\delta}\subseteq X$ such that
for any $x\in X$, there exists $y\in F_{\delta}$ with $x\in \uparrow y\subseteq \uparrow \delta(x)$. Let $F_{\varepsilon}=\{\uparrow\bigcup_{x\in G}\delta(x): G\neq \emptyset,G\subseteq F_{\delta}\}$. For any $\uparrow F\in \mathcal{Q}_{fin}(X)$, let $D_{F}=\{y\in F_{\delta}: x\in \uparrow y\subseteq \uparrow \delta(x)\}$, then $D_{F}\in F_{\varepsilon}$. Since $F\subseteq \uparrow D_{F}$,
$\uparrow F\subseteq \uparrow\bigcup_{x\in D_{F}}\delta(x) \subseteq\uparrow \bigcup_{x\in F}\delta(x)$.

(iii) Let $\mathcal{F}=\{\uparrow F: F\in \mathcal{P}^{w}(X)\}\subseteq \mathcal{Q}_{fin}(X)$ be a directed set(respect to order  $\leq_{\mathcal{Q}}$) and $\uparrow H\subseteq\mathcal{Q}_{fin}(X)$.
If $\mathcal{F}\Rightarrow_{\mathcal{Q}}\uparrow H \Leftrightarrow $ there exists finite directed sets $D_{1}, \cdots, D_{n}\subseteq X$ such that

1. $H\cap \{x: D_{i}\rightarrow x\}\neq\emptyset$ for any $i=1 , \cdots, n$;

2. $H\subseteq \{x: D_{i}\rightarrow x, i= , \cdots, n\}$;

3. $\forall (d_{1},\cdots, d_{n})\in \prod^{n}_{1}D_{i}$, there exists some $\uparrow F\in \mathcal{F}$ such that $\uparrow F\in \bigcup^{n}_{1}\uparrow d_{i}$.

Let $\uparrow \bigcup_{x\in H}\delta(x)\subseteq U\in \tau$. For any $h_{i}\in H$, there exists directed subset $D_{i}$ of $X$ such that $D_{i}\rightarrow_{\tau} x$. Then $\delta(D_{i})\Rightarrow_{\tau} \delta(h_{i})$. Since $X$
 is a continuous space. Then  $\uparrow\delta(d_{i_{0}})\subseteq U$ for some $d_{i_{0}}\in D_{i}$. Then there exists some $\uparrow F\in \mathcal{F}$ such that $\uparrow F\in \bigcup^{n}_{1}\uparrow d_{i_{0}}$. It is easy to see that $\uparrow \bigcup_{x\in F}\delta(x)\subseteq \uparrow \bigcup_{x\in \{d_{i_{0}}: h_{i}\in H\}}\delta(x)\subseteq  U$. This is show that $\varepsilon (\mathcal{F})\Rightarrow _{\mathcal{Q}} \varepsilon(\uparrow H)$ by Proposition\ref{5.4}. That is $\varepsilon$ is a continuous functions.

By Definition \ref{3.1}, $\mathcal{D}^{*}$ is a set consisting of quasi-finitely separating maps on $Y$.

(2) Then we show that $\mathcal{D}^{*}$ is a quasi-approximate identity for $f(X)$. $\mathcal{D}^{*}$ is directed since $\mathcal{D}$ is directed. $\uparrow F \in \mathcal{Q}_{fin}(X)$, $\mathcal{D}^{*}(\uparrow F)=\{\varepsilon(\uparrow F):\varepsilon\in \mathcal{D}^{*}\}=\{\uparrow \bigcup_{x\in F}\delta(x): \delta\in \mathcal{D}\}$. Let $F\subseteq U\in \tau$. For any $f_{i}\in F$, $i=1,\cdots ,n$, $\mathcal{D}(f_{i})\rightarrow_{\tau} f_{i}$, then $\delta_{i}(f_{i})\subseteq U$. Since $\mathcal{D}$ is directed, there exist $\delta\in \mathcal{D}$ such that $\delta(f_{i})\subseteq \uparrow\delta_{i}(f_{i})$, $i=1,\cdots ,n$.
Since $\uparrow \delta(f_{i})\subseteq U$ and $\bigcup_{x\in F}\uparrow \delta(x)\subseteq U$, $\mathcal{D}^{*}(\uparrow F)\Rightarrow_{\mathcal{Q}}\uparrow F$ by Proposition\ref{5.4}. This shows that $\mathcal{D}^{*}$ is a quasi-approximate identity for $f(X)$. So, $\mathcal{Q}_{fin}(X)$ is a FS-space by Definition \ref{2.7}.

\end{proof}

\vspace{1cm} \noindent {\bf Acknowledgments} \small
\def\toto#1#2{\centerline{\hbox to0.7cm{#1\hss}
\parbox[t]{13cm}{#2}}\vspace{2pt}}

This work is supported by the NSFY of China (Nos. 11401435). The authors are grateful
to the referees for their valuable comments which led to the improvement of this paper.

\vspace{1cm} \noindent {\bf Conflict of interest} \small
\def\toto#1#2{\centerline{\hbox to0.7cm{#1\hss}
\parbox[t]{13cm}{#2}}\vspace{2pt}}

The authors declare that there is no conflict of interest in this paper.

\vspace{1cm} \noindent {\bf } \small
\def\toto#1#2{\centerline{\hbox to0.7cm{#1\hss}
\parbox[t]{13cm}{#2}}\vspace{2pt}}

\noindent{\bf References}

\end{document}